\numberwithin{equation}{section}
\newtheoremstyle{fancy1}{10pt}{10pt}{\itshape}{12pt}{\textsc\bgroup}{.\egroup}{8pt}{
}
\newtheoremstyle{fancy2}{10pt}{10pt}{}{12pt}{\itshape}{.}{8pt}{ }
\theoremstyle{fancy1}
\newtheorem{cor}[equation]{Corollary}
\newtheorem{lem}[equation]{Lemma}
\newtheorem{prop}[equation]{Proposition}
\newtheorem*{thm*}{Theorem}
\newtheorem*{main*}{Theorem}
\newtheorem*{cor*}{Corollary}
\newtheorem*{prop*}{Proposition}
\newtheorem*{problem*}{Problem}
\theoremstyle{fancy2}
\newtheorem{rem}[equation]{Remark}
\newtheorem*{rems*}{Remarks}
\newtheorem*{rem*}{Remark}
\newtheorem{example}{Example}
\newtheorem*{example*}{Example}
\newcommand{\cref}[1]{Corollary~\ref{#1}}
\newcommand{\pref}[1]{Proposition~\ref{#1}}
\newcommand{\sref}[1]{Section~\ref{#1}}
\newcommand{\e}{\epsilon}
\newcommand{\M}{\mathcal M}
\newcommand{\DD}{\mathcal D}
\newcommand{\Sph}{\mathbb{S}}
\newcommand{\C}{{\mathbb{C}}}
\newcommand{\R}{{\mathbb{R}}}
\newcommand{\N}{{\mathbb{N}}}
\newcommand{\fg}{{\mathfrak{g}}}
\newcommand{\fk}{{\mathfrak{k}}}
\newcommand{\fh}{{\mathfrak{h}}}
\newcommand{\fm}{{\mathfrak{m}}}
\newcommand{\fsu}{{\mathfrak{su}}}
\newcommand{\fz}{{\mathfrak{z}}}
\def\con#1=#2(#3){#1 \equiv #2 \bmod{#3}}
\newcommand{\tr}{\ensuremath{\operatorname{tr}}}
\newcommand{\diag}{\ensuremath{\operatorname{diag}}}
\newcommand{\Aut}{\ensuremath{\operatorname{Aut}}}
\newcommand{\rank}{\ensuremath{\operatorname{rk}}}
\newcommand{\Ad}{\ensuremath{\operatorname{Ad}}}
\renewcommand{\sec}{\ensuremath{\operatorname{sec}}}
\newcommand{\Ric}{\ensuremath{\operatorname{Ric}}}
\DeclareMathOperator{\spam}{span}
\DeclareMathOperator{\grad}{grad}
\newcommand{\bi}{\mathbf{i}}
\newcommand{\bj}{\mathbf{j}}
\newcommand{\bk}{\mathbf{k}}
\begin{document}

\title{On the variational properties of the prescribed Ricci curvature functional}

\author{Artem Pulemotov}
\address{University of Queensland}
\email{a.pulemotov@uq.edu.au}
\author{Wolfgang Ziller}
\address{University of Pennsylvania}
\email{wziller@math.upenn.edu}

\thanks{This research was supported by the Australian Government through the Australian Research Council's Discovery Projects funding scheme (project DP180102185).  The second author was also supported by the NSF  grant 1506148 and a Reybould Fellowship.}

\begin{abstract}
We study the prescribed Ricci curvature problem for homogeneous metrics. Given a (0,2)-tensor field~$T$, this problem asks for solutions to the equation $\Ric(g)=cT$ for some constant~$c$. Our approach is based on examining global properties of the scalar curvature functional whose critical points are solutions to this equation. We produce conditions for a general homogeneous space under which it has a global maximum.  Finally, we study the behavior of the functional in specific examples to illustrate our result.
\end{abstract}

\maketitle

The prescribed Ricci curvature problem consists in finding a Riemannian metric $g$ on a manifold $M$ such that $\Ric(g)=T$ for a given (0,2)-tensor field~$T$. As was suggested by DeTurck~\cite{D85} and Hamilton~\cite{RH84}, it is more natural to ask instead whether one can solve the equation
\begin{align*}
\Ric(g)=cT
\end{align*}
for some constant~$c$. In fact, on a compact manifold, such a constant appears to be necessary. For example, if $M$ is a surface, this follows from the Gauss--Bonnet theorem.

\smallskip

The prescribed Ricci curvature problem has been studied by many authors since the 1980s; see, e.g.,~\cite{BP19} for an overview of the literature. Considering the difficulty of the equation involved, it is natural to make symmetry assumptions. More precisely, suppose that the metric $g$ and the tensor $T$ are invariant under a Lie group $G$ acting on~$M$. In the case where the quotient $M/G$ is one-dimensional, the problem was addressed by Hamilton~\cite{RH84}, Cao--DeTurck~\cite{CD94}, Pulemotov~\cite{AP13a,APadd} and Buttsworth--Krishnan~\cite{BK20}. The case where $M$ is a homogeneous space~$G/H$ has been studied extensively; see the survey~\cite{BP19} and the more recent references~\cite{BPRZ21,LWa,LWb,APZ21,AGP21}. In some simple situations, the equation can be solved explicitly, as shown, e.g., in~\cite{AP16,TB16}.
 
\smallskip
 
Throughout the paper we assume that $T$ is positive-definite. Without this assumption, the behavior of $S_{|\M_T}$ is very different.  General existence theorems in the homogeneous setting rely on the fact, proven in~\cite{AP16}, that $G$-invariant metrics on $G/H$ with Ricci curvature $cT$ are precisely (up to scaling) the critical points of the scalar curvature functional $S$ on the set  $\M_T=\M_T(G/H)$ of $G$-invariant metrics on $G/H$ subject to the constraint $\tr_gT=1$. We will assume that $G$ and $H$ are compact Lie groups. The goal of this paper is to study the global behavior of the scalar curvature functional.  In \cite{MGAP18,AP19} one finds a general theorem for the existence of a global maximum, under certain assumptions on the isotropy representation of~$G/H$. Our main result removes these assumptions, thereby expanding greatly the class of spaces to which the theorem applies. Moreover, we improve the conditions for the existence of the global maximum and interpret them geometrically.
 
\smallskip
 
If $H$ is maximal in $G$, the supremum of $S$ is always attained. Otherwise, the global behavior of $S$ depends on the set of intermediate subgroups, i.e., Lie groups $K$ with $H\subset K\subset G$. For every such $K$, one has the homogeneous fibration
\begin{equation*}
K/H\to  G/H \to G/K.
\end{equation*}
If we fix homogeneous metrics $g_F$ and $g_B$ on the fiber $K/H$ and the base $G/K$, it is natural to study the two-parameter variation 
\begin{equation*}
g_{s,t}=\frac1s \,g_F + \frac1t\,g_B
\end{equation*}
of $g=g_F+g_B$. Notice that we define this variation using the reciprocals of $s$ and~$t$. In fact, we often deal with inverses of metrics rather than metrics themselves, which enables us to view $\M_T$ as a pre-compact space. Solving  the constraint $\tr_gT=1$ for~$s$, we obtain a one-parameter family $g_t\in S_{|\M_T} $, called a {\it canonical variation}, with scalar curvature given by 
\begin{equation*}
S(g_t)= \frac{S_F}{T_1^*}+T_2^*\Big(\frac{S_B}{T_2^*}-\frac{S_F}{T_1^*}\Big)t-\frac{t^2T_1^*}{1-tT_2^*}|A|_g.
\end{equation*}
Here $T_1^*=\tr_{g_F}T_{|F}$ and $T_2^*=\tr_{g_B}T_{|B}$ are the traces of $T$ on the fiber and the base, $S_F$ and $S_B$ are the scalar curvatures of $g_F$ and $g_B$, and $A$ is the O'Neil tensor of the Riemannian submersion. As a consequence,
\begin{equation*}
\lim_{t\to0}S(g_t)=\frac{S_F}{T_1^*} \qquad\text{and}\qquad
\lim_{t\to0}\frac{dS(g_t)}{d t}=T_2^*\Big(\frac{S_B}{T_2^*}-\frac{S_F}{T_1^*}\Big),
\end{equation*}
which means that $S_F$ and $S_B-S_F$ control the behavior of $S$ at infinity. It is easy to see that $S$ is bounded from above if $T$ is positive-definite, and it is therefore natural to search for a global maximum of~$S$. The above formulas motivate two invariants of the intermediate subgroup $K$ with Lie algebra~$\fk$:
\begin{align*}
\alpha_\fk=\sup\Big\{\frac{S(h)}{\tr_hT_{|F}}\,\Big|\, h\in \M_T(K/H)\Big\}\qquad\text{and}\qquad  \beta_\fk=\sup\Big\{\frac{S(h)}{\tr_hT_{|B}} \,\Big|\, h\in \M_T(G/K) \Big\}.
\end{align*}
These quantities maximize the limits and the ``derivatives" of $S$ at infinity. We furthermore introduce an invariant of the homogeneous space $G/H$:
\begin{equation*}
\alpha_{G/H}=\textstyle\sup_{\fk}\alpha_\fk,
\end{equation*}
where the supremum is taken over all Lie algebras of intermediate subgroups. We can now state our main result.
 
\begin{main*}{\label{main_1}} Let $G/H$ be a compact homogeneous space  and $T$ a positive-definite $G$-invariant (0,2)-tensor field on $G/H$.	If $H$ is not maximal in $G$, then the set of intermediate subgroups $K$ with $\alpha_\fk=\alpha_{G/H}$ is non-empty. If $K$ is such a subgroup of the lowest possible dimension and $\beta_{\fk}-\alpha_{\fk}>0$, then $S_{|\M_T}$ achieves its supremum at some metric~$g\in\M_T$ and hence $\Ric(g)=cT$ for some $c>0$.
\end{main*}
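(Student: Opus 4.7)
The plan is to argue by contradiction: assume $S|_{\M_T}$ does not attain its supremum. I will show every non-convergent maximizing sequence satisfies $\limsup S \le \alpha_{G/H}$, and then exhibit $g \in \M_T$ with $S(g) > \alpha_{G/H}$, contradicting the assumption. View $\M_T$ through the inverses $g^{-1}$, which lie in a pre-compact set of $\Ad(H)$-invariant positive semidefinite forms on $\fg/\fh$. A maximizing sequence $(g_n)$ without a subsequential limit in $\M_T$ has, after passing to a subsequence, $g_n^{-1}\to h$ with non-trivial kernel. The first and hardest step is to prove that the non-degenerate support of $h$ is the image in $\fg/\fh$ of $\fk/\fh$ for some intermediate subalgebra $\fh\subset\fk\subset\fg$, so that $h$ lies on the boundary of a canonical-variation family along the fibration $K/H\to G/H\to G/K$. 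This is where the assumption on the isotropy representation of $G/H$ made in \cite{MGAP18,AP19} is removed. I expect to establish it by combining the upper bound on $S(g_n)$ with the constraint $\tr_{g_n}T=1$: were the support of $h$ not closed under bracket modulo $\fh$, mixed curvature terms coming from brackets of collapsing and non-collapsing directions would force $S(g_n)\to-\infty$.

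\smallskip

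Given the fibration through $K$, writing $g_n$ asymptotically as a canonical variation with parameter $t_n\to 0$ and applying
\begin{equation*}
S(g_t)=\frac{S_F}{T_1^*}+T_2^*\Bigl(\frac{S_B}{T_2^*}-\frac{S_F}{T_1^*}\Bigr)t-\frac{t^2T_1^*}{1-tT_2^*}|A|_g
\end{equation*}
yields $\limsup_n S(g_n)\le\alpha_\fk\le\alpha_{G/H}$. Non-emptiness of $\{\fk : \alpha_\fk=\alpha_{G/H}\}$ follows by taking a supremizing sequence of intermediate subalgebras, extracting a convergent subsequence using compactness of the Grassmannian modulo conjugation, and invoking upper semicontinuity of $\fk\mapsto\alpha_\fk$. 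Let $K$ be of minimal dimension among those with $\alpha_\fk=\alpha_{G/H}$. Any proper intermediate $H\subsetneq K'\subsetneq K$ satisfies $\alpha_{\fk'}<\alpha_\fk$ by minimality; re-running the two previous steps inside $\M_T(K/H)$, a non-convergent maximizing sequence there would give $\alpha_\fk\le\alpha_{\fk'}$ for some such $K'$, a contradiction. Hence the supremum $\alpha_\fk$ is attained by some $g_F\in\M_T(K/H)$.

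\smallskip

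For the witness, pick $g_B\in\M_T(G/K)$ with $S(g_B)/\tr_{g_B}T|_B$ arbitrarily close to $\beta_\fk$, and form the associated canonical variation $g_t$. The formula gives $\lim_{t\to 0}S(g_t)=\alpha_\fk$ and an initial derivative arbitrarily close to $T_2^*(\beta_\fk-\alpha_\fk)>0$, so for small $t>0$ one has $S(g_t)>\alpha_\fk=\alpha_{G/H}$, contradicting the earlier bound. Therefore $S|_{\M_T}$ attains its supremum at some $g\in\M_T$. As a critical point of $S|_{\M_T}$ subject to $\tr_gT=1$, this $g$ satisfies $\Ric(g)=cT$; tracing yields $c=S(g)>\alpha_{G/H}\ge 0$, so $c>0$. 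The chief obstacle is Step~1: without any restriction on the isotropy representation, one must rule out degenerations whose non-degenerate support is not a complement of an intermediate subalgebra, which requires delicate control of how curvature and bracket contributions interact as the inverse metric degenerates.
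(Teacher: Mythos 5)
Your proposal follows the same strategy as the paper: the degeneration dichotomy (non-subalgebra strata force $S\to-\infty$, subalgebra strata bound $\limsup S$ by $\alpha_\fk$), attainment of $\alpha_\fk$ for the minimal-dimensional $K$, and the canonical variation producing a witness with $S>\alpha_{G/H}$; the only cosmetic difference is that the paper establishes compactness of the superlevel set $\{S\ge a\}$ for $a>\alpha_{G/H}$ directly (Corollary~\ref{cor_equiv_est}), whereas you package the same estimates as a contradiction on a maximizing sequence. You have also correctly identified the genuine technical hurdle — ruling out degenerations whose non-degenerate support is not a subalgebra complement when isotropy summands are equivalent — which the paper addresses in Propositions~\ref{local_control_3} and~\ref{alg_control_3} by parametrizing $\M$ via the compact space of decompositions~$\DD$ and doing careful $\e$-estimates in $\R^r\times\DD$.
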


In fact, we show that there exists $\e>0$ such that the set $\{ g\in \M_T(G/H)\mid S(g)>\alpha_{G/H}+\e \}$ is non-empty and compact, which implies the result. This requires careful estimates of the behavior of~$S_{|\M_T}$ at infinity.

\smallskip

We illustrate this result by examining as examples the Stiefel manifold $V_2(\R^5)$ and Ledger--Obata spaces. These manifolds are especially interesting since the isotropy representation has equivalent modules and there are families of intermediate subgroups. Hence not every metric is ``diagonal" (see discussion below) as was assumed in all previously considered examples. The Euler--Lagrange equations for $S_{|\mathcal M_T}$ are complicated and cannot be solved explicitly. Our theorem implies the existence of a large set of tensors $T$ such that  $S_{|\M_T}$ has a global maximum, although in many cases, $S_{|\M_T}$ has no critical points at all. These examples also have interesting features for critical points besides global maxima. We find:
\smallskip
\begin{itemize}
	\item[\small{$\blacktriangleright$}]
	A tensor $T$ such that $S_{|\M_T}$ admits a two-dimensional submanifold of critical points. This submanifold is non-degenerate and a local maximum.
	
	\item[\small{$\blacktriangleright$}]
	A large set of critical points that are global maxima among diagonal metrics but saddles in the space of all invariant metrics.
	
	\item[\small{$\blacktriangleright$}]
	A set of tensors $T$ for which $S_{|\M_T}$ has both a circle of saddles and a strict local maximum, although for some of these $T$, the functional does not achieve its global maximum.
\end{itemize}
\smallskip

We note that generic critical points are isolated. More precisely, as shown in~\cite{LWa}, there exists an open and dense subset in the space of $G$-invariant metrics on which the Ricci map is a local diffeomorphism (up to scaling).
\smallskip

Finally, we observe that the functional $S_{|\M_T}$ has a large set of critical points on flag manifolds, e.g., $SU(3)/T^2$. Let $J$ be a complex structure  on a compact generalized flag manifold $G/H=G/C(\tau)$, where $C(\tau)$ is the centralizer of a torus $\tau$ in~$G$. It is well known that there exists a unique $G$-invariant Hermitian  bi-linear form $h_J$ such that $\Ric(g) = -h_J$ for every K\"ahler metric $g$ compatible with~$J$; see~\cite{K55}.  Since the set of such metrics has the same dimension as the torus~$\tau$, we obtain a smooth critical submanifold in $\M_T$ of dimension $\dim\tau-1$. Furthermore, this critical submanifold contains the unique K\"ahler--Einstein metric associated to~$J$.  For instance, if $G/H = SU(n + 1)/T^n$, there is an $(n-1)$-dimensional smooth submanifold of K\"ahler metrics consisting of critical points of $S_{|\M_T}$ for $T$ equal to~$-h_J$ (up to scaling). In general, there exist several inequivalent complex structures on $G/H$ and hence non-isometric critical submanifolds.
\smallskip

We now outline the strategy of our proofs. To describe the set of homogeneous metrics on $G/H$, one decomposes the tangent space $\fm\simeq\fg/\fh$ into a sum of irreducible modules $\fm_1\oplus\cdots\oplus\fm_r$ and considers so-called diagonal metrics 
$$
g=x_1Q_{|\fm_1}+ x_2Q_{|\fm_2} +\cdots+  x_r Q_{|\fm_r},
$$
where $Q$ is a fixed bi-invariant metric on the Lie algebra of~$G$. Not every homogeneous metric is of this form when some of the summands $\fm_i$ are equivalent. However, it can always be reduced to this form by choosing another decomposition of $\fm$ as above. With the substitution $y_i=\frac{1}{x_i}$, the constraint $\tr_gT=1$ becomes simply $\sum d_iT_iy_i=1$, where $d_i=\dim\fm_i$ and $T_{|\fm_i}=T_i\, Q_{|\fm_i}$. Thus the set of diagonal metrics in $\M_T$ is parameterized by a simplex $\Delta$ with stratified boundary~$\partial\Delta$. In this construction, the strata are indexed by the sets of those variables $y_i$ that vanish. Some of them are marked by subalgebras $\fk$ with $\fh\subset\fk\subset\fg$ and denoted~$\Delta_\fk$. If $(g_i)\subset\M_T$ is a divergent sequence of metrics of bounded scalar curvature, then there exist such an intermediate subalgebra and a subsequence of $(g_i)$ that converges to a point in~$\Delta_\fk$. At the remaining strata, denoted~$\Delta_\infty$, the scalar curvature goes to $-\infty$. We will in fact show that $\alpha_\fk$ is the largest possible limit of the scalar curvature achieved as one approaches~$\Delta_\fk$. Since $S_{|\M_T}$ is bounded from above, it is natural to look at subalgebras $\fk$ with $\alpha_\fk=\alpha_{G/H}$ and determine conditions under which there are metrics with scalar curvature larger than $\alpha_{G/H}$. This is achieved by using the ``derivatives" $\beta_\fk-\alpha_\fk$, and if such metrics exists, we show that $\{ g\in \M_T(G/H)\mid S(g)>\alpha_{G/H}+\e \}$ is compact for some $\e>0$. One of the main difficulties is that, generally speaking, the scalar curvature does not extend continuously to $\partial\Delta$ and $\alpha_\fk$ may not be achieved by a metric on~$K/H$. In addition, $\beta_\fk-\alpha_\fk$ may not be an actual derivative. This also explains, in part, why we cannot arbitrarily choose a subalgebra $\fk$ with $\alpha_\fk=\alpha_{G/H}$ in our main theorem. For the proof, we need to produce careful estimates for the scalar curvature near $\partial\Delta$. The final difficulty is to extend our arguments to the set of all metrics in $\M_T$ when some of the isotropy summands are equivalent and $\fm$ can be decomposed into irreducible modules in many substantially different ways. This requires further careful estimates.

\smallskip

The paper is organized as follows. In  \sref{prelim} we recall properties of homogeneous spaces and Riemannian submersion that we will need in our proofs. In \sref{sec_complex} we describe the simplicial complex $\Delta$ and the stratification of $\partial\Delta$ by subalgebras, as well as the behavior of the scalar curvature near the strata in $\partial\Delta$. In \sref{sec_submersions} we use  Riemannian submersions defined by intermediate subgroups to understand when there exist metrics $g$ with $S(g)>\alpha_\fk$ near a stratum $\Delta_\fk$. In \sref{sec_Maxima} we vary the decompositions and prove our main theorem.  Finally, in \sref{sec_Example} we study the Stiefel manifold and the Ledger--Obata space as examples.
 
\bigskip

\section{Preliminaries}\label{prelim}

\smallskip

We first recall some basics of the geometry of a homogeneous space. Let $H\subset G$ be two compact Lie groups with Lie algebras~$\fh\subset\fg$ such that  $G/H$ is an almost effective homogeneous space. 
We fix a bi-invariant metric $Q$ on~$\fg$, which defines a $Q$-orthogonal $\Ad_H$-invariant splitting $\fg=\fh\oplus\fm$, i.e., $\fm=\fh^\perp$. The tangent space $T_{eH} (G/H)$ is identified with $\fm$, and $H$ acts on $\fm$ via the adjoint representation $\Ad_H$.  A $G$-invariant metric on $G/H$ is determined by an $\Ad_H$-invariant inner product on~$\fm$. We denote by $\M(G/H)$, or sometimes simply by~$\M$, the space of $G$-invariant metrics on $G/H$. We will  assume throughout the paper that $G/H$ is not a torus since in this case all  $G$-invariant metrics are flat, in particular, $\Ric(g)=0$ for all~$g\in\M$.
 
We describe metrics in $\M$ in terms of $\Ad_H$-invariant splittings. Under the action of $\Ad_H$ on $\fm$, we decompose
\begin{equation*}
\fm=\fm_1\oplus\ldots\oplus\fm_r,
\end{equation*}
where $\Ad_H$ acts irreducibly on $\fm_i$. Some of these summands may need to  be one-dimensional if there exists a subspace of $\fm$ on which $\Ad_H$ acts as the identity. We denote by $\DD$ the space of all such decompositions and use the letter $D\in\DD$ for a particular choice of decomposition. The space $\DD$ has a natural topology induced from the embedding into the products of Grassmannians $G_k(\fg)$ of $k$-planes in $\fg$. Clearly, $\DD$ is compact.

If $T$ is a $G$-invariant symmetric bi-linear form field on~$G/H$, it is determined by its value on~$\fm$. We are interested when such a bi-linear form field is (up to scaling) the Ricci curvature of a metric~$g\in\M$, i.e., when
\begin{equation}\label{prescribe}
\Ric(g)=cT \qquad \text{for some constant $c$.}
\end{equation}
Throughout the paper we will assume that $T$ is positive-definite. We may also assume that $c>0$ in~\eqref{prescribe} since a compact homogeneous space does not admit any metrics with $\Ric\le 0$ unless it is a torus (see~\cite[Theorem~1.84]{AB87}), which we excluded above.

Define the hypersurface
\begin{equation*}
\M_T(G/H)=\{g\in\M\mid \tr_gT=1\}\subset\M,
\end{equation*}
where $\tr_g$ is the trace with respect to $g$. We denote it simply by $\M_T$ when the homogeneous space is clear from context. As shown in~\cite{AP16}, a solution to~\eqref{prescribe} can we viewed as a critical point of the functional
\begin{equation*}
S\colon \M_T \to \R,
\end{equation*}
where $S(g)$ is the scalar curvature of $g$. More precisely, the following result holds.

\begin{prop}\label{critical}
	The Ricci curvature of a metric $g\in\mathcal M_T$ equals $cT$ for some $c\in\mathbb R$ if and only if $g$ is a critical point of $S_{|\mathcal M_T}$.
\end{prop}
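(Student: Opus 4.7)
The plan is to apply the Lagrange multiplier principle on the open cone $\M=\M(G/H)$, whose tangent space at any $g$ is the finite-dimensional vector space of $\Ad_H$-invariant symmetric bi-linear forms on~$\fm$, equipped with the positive-definite inner product $\langle\cdot,\cdot\rangle_g$ induced by~$g$. To invoke the principle one needs the differentials of the constraint $F(g)=\tr_g T$ and of the functional $S\colon\M\to\R$.

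For the constraint, the identity $\tfrac{d}{dt}g^{ij}=-g^{ia}g^{jb}h_{ab}$ gives
\begin{equation*}
dF_g(h)=-\langle T,h\rangle_g.
\end{equation*}
Evaluating on the radial direction produces $dF_g(g)=-\tr_g T=-1$ on~$\M_T$, so $\M_T$ is a smooth codimension-one submanifold of~$\M$. For the scalar curvature, I would differentiate the total scalar curvature $g\mapsto S(g)\vol(M,g)$ via the classical Einstein--Hilbert formula, exploit $G$-invariance of $g$ and~$h$ so that $S(g_t)$ is a constant function on~$M$ (killing the usual divergence and Laplacian contributions), and cancel the volume derivative $\tfrac12\tr_g h\cdot\vol(M,g)$ against the $\tfrac12 S(g)\langle g,h\rangle_g=\tfrac12 S(g)\tr_g h$ term to obtain
\begin{equation*}
dS_g(h)=-\langle \Ric(g),h\rangle_g.
\end{equation*}

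With both formulas in hand, the claim becomes linear algebra. By Lagrange multipliers, $g\in\M_T$ is critical for $S_{|\M_T}$ if and only if there exists $c\in\R$ with $dS_g=c\,dF_g$ on all of $T_g\M$, equivalently $\langle\Ric(g)-cT,h\rangle_g=0$ for every $\Ad_H$-invariant symmetric~$h$. Since $\Ric(g)-cT$ is itself such a tensor and $\langle\cdot,\cdot\rangle_g$ is non-degenerate on this space, this forces $\Ric(g)=cT$. The converse is immediate: $\Ric(g)=cT$ yields $dS_g=c\,dF_g$, which vanishes on $\ker dF_g=T_g\M_T$.

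The main technical point is the derivation of the homogeneous first-variation formula $dS_g(h)=-\langle\Ric(g),h\rangle_g$, since the pointwise Einstein--Hilbert identity contains divergence and Laplacian terms that do not vanish pointwise but are absorbed into the volume calculation only after using that $S(g_t)$ is constant on~$M$. Once this is done carefully, the remainder is a direct application of Lagrange multipliers, with the positive-definiteness of $\langle\cdot,\cdot\rangle_g$ on invariant tensors being the only non-trivial ingredient used to convert the weak equation into the pointwise identity $\Ric(g)=cT$.
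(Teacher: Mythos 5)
Your proof is correct and follows the standard argument. The paper itself does not reprove this proposition -- it cites Pulemotov \cite{AP16} -- and the cited proof is precisely the Lagrange-multiplier argument you give, resting on the homogeneous first-variation formula $dS_g(h)=-\langle\Ric(g),h\rangle_g$ (the divergence and Laplacian terms in the pointwise Einstein--Hilbert variation being $G$-invariant and hence constant, and therefore zero after integration over the compact quotient), together with $dF_g(h)=-\langle T,h\rangle_g$ and the nondegeneracy of $\langle\cdot,\cdot\rangle_g$ on $\Ad_H$-invariant symmetric tensors.
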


Here $c$ is the Lagrange multiplier of the variational problem. Our main interest in this paper is to describe the geometry of the functional $S$ and its implications for when~\eqref{prescribe} has a solution. 

We now recall the formulas for the scalar curvature and the Ricci curvature of a homogeneous metric. Given $g\in\M$, we have  $Q_{|\fm_i}=x_i\, Q_{|\fm_i}$ for some constant $x_i>0$. In general, $\fm_i$ and $\fm_j$ do not have to be orthogonal if some of these summands are equivalent. But we can diagonalize $g$ and $Q$ simultaneously, and hence there exists a decomposition $D\in\DD$ such that the metric has the form 
\begin{equation}\label{diagonalmetric}
g=x_1Q_{|\fm_1}+ x_2Q_{|\fm_2} +\cdots+  x_r Q_{|\fm_r}.
\end{equation}
We call such metrics diagonal with respect to our choice of $D$ and denote their set by $\M^D(G/H)$, or simply $\M^D$. Thus, $\M=\cup_{D\in\DD}\M^D$. We also denote $\M_T^D=\M_T\cap\M^D$. The scalars $x_i$ are simply the eigenvalues of $g$ with respect to~$Q$. When these eigenvalues have multiplicity, and some of the modules in the corresponding eigenspace are equivalent under the action of $\Ad_H$, we note that $g\in\M^D$ for a compact infinite family of decompositions~$D$.  In order to describe all homogeneous metrics on~$G/H$, we can thus  restrict ourselves to diagonal metrics but allow the decomposition to change. The advantage is that, while the scalar curvature of a homogeneous metric for a fixed decomposition is quite complicated and hence somewhat intractable, for a diagonal metric it has a much simpler form. This idea was first used in~\cite{MWWZ86} to study $G$-invariant Einstein metrics.

We define the structure constants
\begin{equation*}
[ijk]=\sum_{\alpha,\beta,\gamma} Q([e_\alpha,e_\beta],e_\gamma)^2,\qquad i,j,k=1,\cdots,r,
\end{equation*}
 where $(e_\alpha)$, $(e_\beta)$ and $(e_\gamma)$ are $Q$-orthonormal bases of $\fm_i$, $\fm_j$ and~$\fm_k$. Clearly, $[ijk]\ge 0$, and $[ijk]=0$ if and only if $Q([\fm_i,\fm_j],\fm_k)=0$. We will denote by $B$  the Killing form of~$G$. By the irreducibility of~$\fm_i$, there exist constants $b_i\ge0$ such that 
\begin{equation*}
B_{|\fm_i}=-b_i Q_{|\fm_i} 
\end{equation*}
with $b_i=0$ if and only if $\fm_i$ lies in the center of $\fg$. Furthermore,  not all of $b_i$ vanish since otherwise $\fm$ is in the center $\fz(\fg)$ and $G/H$ is a torus.
Using this notation, and $d_i=\dim\fm_i$, the scalar curvature is given by
\begin{align}\label{scalcurvx}
S(g)&=\frac12\sum_{i}\frac{d_ib_i}{x_i}-\frac14\sum_{i,j,k}[ijk]\frac{x_k}{x_ix_j}
\end{align}
(see~\cite{MWWZ86}).
For the tensor $T$, we introduce the constants $T_i$ such that
\begin{equation*}
T_{|\fm_i}=T_i\, Q_{|\fm_i}.
\end{equation*}
Varying over all decompositions, these constants determine~$T$ uniquely.

We now recall some formulas for Riemannian submersions which will be useful for us. Let $K$ be a compact subgroup with Lie algebra $\fk$ lying between $H$ and~$G$, i.e., $H\subset K\subset G$. We then have a homogeneous fibration
\begin{equation*}
K/H\to  G/H \to G/K.
\end{equation*}
We will often consider metrics with respect to which the projection $G/H\to G/K$ is a Riemannian submersion. Assume that the decomposition $\fm=(\fk\cap\fm)\oplus \fk^\perp$ is orthogonal with respect to both $Q$ and~$g$. Then $g\in\M$ is a Riemannian submersion metric if and only if $g_{|\fk^\perp}$ is $\Ad_K$-invariant. In this case, $g_{|\fk\cap\fm}$ can be thought of as a homogeneous metric on the fiber $F=K/H$, and $g_{|\fk^\perp}$ a homogeneous metric on the base $B=G/K$.  We can introduce a new submersion metric $g_{s,t}$ on $G/H$ by scaling the fiber and the base, i.e., 
\begin{equation*}
g_{s,t}=\frac1s \,g_F + \frac1t\,g_B, \qquad \text{ where } \qquad g_F=g_{|\fk\cap\fm}\qquad \text{and} \qquad g_B=g_{|\fk^\perp}.
\end{equation*}
One has the following formula for the scalar curvature (see, e.g.,~\cite[Proposition~9.70]{AB87}):
\begin{equation}\label{scalar_sub}
S(g_{s,t})= s S_F + t S_B-\frac{t^2}{s}|A|_g,
\end{equation}
where $A$ is the O'Neill tensor of the submersion, while $S_F$ and $S_B$ are the scalar curvatures of $g_F$ and~$g_B$.
The proof of this formula is a pointwise calculation and hence extends to the situation where $K$ is not compact (i.e., $G/K$ is not a manifold). In our case it can indeed happen that for some Lie algebra $\fk\subset\fg$ the connected subgroup $K\subset G$ with Lie algebra $\fk$ is not compact. However, this will not  affect our discussions.

Let $N(H)$ denote the normalizer of $H$. Every element $n\in N(H)$ acts on $G/H$ as a diffeomorphism via right translation:  $R_n(gH)= gn^{-1}H$. Combining this with the action by left translation, we obtain an action on the space of metrics via pullback: $\M\ni g\mapsto (\Ad_n)^*(g)\in\M$. This induces an isometry between $g$ and $(\Ad_n)^*(g)$, and hence $S((\Ad_n)^*(g))=S(g)$ for the scalar curvature. This also holds for the possibly larger group $\Aut(G,H)$ of automorphisms of $G$ that preserve $H$. Note though that $S_{|\M_T}$ is invariant under $\Aut(G,H)$, or one of its subgroups, only if $T$ is as well, in which case the orbit of a critical point consists of further critical points.

The following remark, based on Palais's principle of symmetric criticality, will be useful for us. If $T$ is invariant under a subgroup $L\subset \Aut(G,H)$, and  if $\M_T^L$ is the set metrics in $\M_T$ invariant under~$L$, then critical points of the restriction $S_{|\M_T^L }$
are also critical points of $S_{|\M_T}$. Indeed,  given $g\in \M_T^L$, the Ricci curvature $\Ric(g)$ and hence the gradient $\grad S_{|\M_T}(g)$ is invariant under~$L$. Consequently, $\grad S_{|\M_T}(g)$ must be tangent to~$\M_T^L$.

In the remainder of the paper we will assume for simplicity that $G$, $H$ and all the intermediate subgroups are connected. Let us explain why this is in fact not necessary. One only needs to make the following modification. 
If $G$ or $H$ is not connected, we consider only intermediate subalgebras $\fk$ that are Lie algebras of intermediates subgroups $H\subset K\subset G$. This is easily seen to be  equivalent to saying that $\fk$ must be invariant under~$\Ad_H$.
The proofs of all of our results apply without any changes, and the conclusions are also the same. This is useful when the space of invariants has large dimension or there are many intermediate subgroups. Adding components to $G$ and $H$ will reduce $\dim\M_T$ and may easily imply the existence of some critical points. This happens, for instance, when $G/H$ is isotropy irreducible; see~\cite{WZ91} for many examples.

\bigskip
\section{The simplicial complex}\label{sec_complex}

\smallskip

In Sections~\ref{sec_complex}--\ref{sec_submersions} we fix the decomposition $D\in\DD$ and study the set of metrics $\M^D$ diagonal with respect to~$D$.

It will be convenient for us to describe a homogeneous metric in terms of its inverse since this makes the space of metrics precompact. If $g\in\M_T^D$ is given by~\eqref{diagonalmetric}, we set $y_i=\frac1{x_i}$ and obtain the following formulas for the scalar curvature and its constraint:
\begin{equation}\label{scalar_y}
S(g)=\frac12\sum_{i}d_ib_iy_i-\frac14\sum_{i,j,k}\frac{y_iy_j}{y_k}[ijk],
\qquad \tr_gT=\sum_{i}d_iT_iy_i=1.
\end{equation}
We need to study the behavior of $S_{|\M_T^D}$ at infinity, which means that at least one of the variables $y_i$ goes to $0$. It is natural to introduce a simplicial complex and its stratification. Specifically, let 
\begin{equation*}
\Delta=\Delta^D=\big\{(y_1,\cdots, y_r)\in \R^r\,\big|\, \textstyle\sum_{i} d_iT_iy_i=1 \text{ and } y_i> 0\big\}.
\end{equation*}
Notice that the numbers $T_i$, and thus the simplex~$\Delta$, depend on the choice of~$D$. This simplex is a natural parametrization of the set~$\M_T^D$. We identify a metric $g\in\M_T^D$ with $y=(y_1,\cdots, y_r)\in\Delta$.

The boundary of $\Delta$ consists of lower-dimensional simplices. For every nonempty proper subset $J$ of the index set $I=\{1,\cdots,r\}$, let
\begin{equation*}
\Delta_J=\{y\in\partial\Delta\mid y_i>0 \text{ for } i\in J,\ y_i=0 \text{  for } i\in J^c\}.
\end{equation*}
Thus $\Delta_J$ is a $|J|$-dimensional simplex, which we call a \emph{stratum} of~$\partial\Delta$.  
The closure of $\Delta_J$ satisfies
\begin{equation*}
\bar{\Delta}_J=\bigcup_{J'\subset J}\Delta_{J'},
\end{equation*}
and we call $\Delta_{J'}$ a stratum adjacent to $\Delta_{J}$ if $J'$ is a nonempty proper subset of~$J$. It will also be useful for us to consider tubular $\e$-neighborhoods of strata for $\e>0$:
\begin{equation*}
T_\e(\Delta_J)=\{y\in\Delta \mid y_i\le \e \text{ for }  i\in J^c\}.
\end{equation*}
Finally, we associate to each stratum an $\Ad_H$-invariant subspace of $\fm$:
\begin{equation*}
\fm_J=\bigoplus_{i\in J}\fm_i.
\end{equation*}

We can fill out the closure $\bar\Delta$ with geodesics starting at the center. To this end, consider the unit sphere
$$
\Sph=\big\{v\in\R^r \,\big|\, \textstyle\sum_i d_iT_iv_i=0,~\sum v_i^2=1\big\}
$$
of dimension $r-2$. Define a geodesic $\gamma_v\colon [0,t_v]\to\bar\Delta$ by setting $\gamma_v(t)=v_0-tv$, where
\begin{equation*}
v\in\Sph,\qquad t_v=\frac1{r\max_id_iT_iv_i},\qquad \text{ and }\qquad v_0=(v_{01},\ldots,v_{0r})=\Big(\frac1{rd_1T_1},\ldots,\frac1{rd_rT_r}\Big).
\end{equation*}
The stratification of $\partial\Delta$ induces one of the sphere:
\begin{equation*}
\Sph_J=\{v\in\Sph\mid \gamma_v(t_v)\in \Delta_J\}.
\end{equation*}

Our first observation is that we can mark the strata with subalgebras.

\begin{prop}\label{infinity}
The functional $S_{|\M_T^D}$ is bounded from above. Furthermore, for any $v\in\Sph$ either $S(\gamma_v(t))\to-\infty$ as $t\to t_v$ or $v\in\Sph_J$ for some $J$ such that $\fh\oplus\fm_J$ is a subalgebra of~$\fg$. 
\end{prop}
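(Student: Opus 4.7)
The plan is to read off both claims directly from the explicit formula \eqref{scalar_y}, exploiting positive-definiteness of $T$ and non-negativity of the structure constants $[ijk]$.

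For boundedness from above: since $T$ is positive-definite, all $T_i>0$, so the constraint $\sum d_iT_iy_i=1$ with $y_i>0$ gives $0<y_i<(d_iT_i)^{-1}$. Hence the linear term $\tfrac12\sum_i d_ib_iy_i$ in \eqref{scalar_y} is bounded on $\M_T^D$. The quadratic term $-\tfrac14\sum_{i,j,k}\tfrac{y_iy_j}{y_k}[ijk]$ is non-positive because all $[ijk]\ge 0$ and all $y_i>0$. The sup bound on $S_{|\M_T^D}$ follows immediately.

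For the dichotomy, I would argue by contrapositive. Given $v\in\Sph$, the geodesic $\gamma_v(t)=v_0-tv$ terminates in $\bar\Delta$ at $t=t_v$, and the stopping time is arranged so that at least one coordinate vanishes while the rest remain non-negative; concretely, if $J=\{i\mid v_{0i}-t_vv_i>0\}$ then $\gamma_v(t_v)\in\Delta_J$, so automatically $v\in\Sph_J$. Suppose $\fh\oplus\fm_J$ is \emph{not} a subalgebra of $\fg$. Because $\fm=\oplus_i\fm_i$ is $Q$-orthogonal and $\Ad_H$-invariant, the failure $[\fm_J,\fm_J]\not\subset\fh\oplus\fm_J$ yields indices $i,j\in J$ and $k\in J^c$ with $Q([\fm_i,\fm_j],\fm_k)\ne 0$, hence $[ijk]>0$.

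Now evaluate along the geodesic: $y_l(t)=v_{0l}-tv_l$ is linear in $t$, so for $i,j\in J$ the values $y_i(t),y_j(t)$ converge to positive limits as $t\to t_v^-$ and hence are bounded below by a positive constant on some left-neighborhood of $t_v$, while $y_k(t)\to 0^+$ for $k\in J^c$. Therefore the single summand $-\tfrac14\tfrac{y_iy_j}{y_k}[ijk]$ tends to $-\infty$. Since every summand in the quadratic part is non-positive, they cannot cancel this divergence, and the bounded linear part cannot offset it either; so $S(\gamma_v(t))\to-\infty$, completing the contrapositive.

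The only place requiring care is the last paragraph: one must verify that no other summand can blow up to $+\infty$ and absorb the divergence. This is precisely where the sign structure $[ijk]\ge 0$ together with $y_l>0$ is essential, since it makes the whole quadratic sum a sum of non-positive terms. I expect this sign argument to be the conceptual crux; the rest is bookkeeping with the simplex coordinates.
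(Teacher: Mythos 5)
Your proof is correct and follows essentially the same approach as the paper: boundedness from above comes from the sign structure of \eqref{scalar_y} combined with the trace constraint, and the dichotomy is established by exhibiting a single non-cancellable divergent term $-[ijk]\,y_iy_j/y_k$ along $\gamma_v(t)$ when $\fh\oplus\fm_J$ fails to be a subalgebra. Your explicit remark that the whole quadratic part is a sum of non-positive terms (so nothing can offset the divergence) is the right point to isolate, and matches the logic in the paper's argument.
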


\begin{proof} Let $A=\frac{\max_ib_i}{\min_i T_i}$, which is well-defined  since $T_i>0$ by assumption. We also have $A>0$ since $G/H$ is not a torus. Then \eqref{scalar_y} implies that $S(g)\le A$.
	
For the second claim,  let   $J$ be the index set with $J^c=\{i\mid v_0-t_vv_i=0\}$. Obviously, $J\ne I$ and $t_v>0$. 
If $\fh\oplus\fm_J$ is not a subalgebra, then there exist $i,j\in J$ and $\ k\in J^c$ such that $[ijk]\ne 0$. Then in formula~\eqref{scalar_y}, we have a contribution of the form
	\begin{equation*}
-[ijk]\,\frac{y_iy_j}{y_k}	= -[ijk]\,\frac{(v_{0i}-tv_i)(v_{0j}-tv_j)}{v_{0k}-tv_k}
	\end{equation*}
	with $0\le t< t_v$. Since $i,j\in J$ and $k\in J^c$, we know that $(v_{0i}-tv_i)(v_{0j}-tv_j)$ stays bounded away from $0$ and $v_{0k}-tv_k\to 0$ as $t\to t_v$. This implies that $S(\gamma_v(t))\to -\infty$ as $t\to t_v$.
\end{proof}

We also need to control how fast the scalar curvature goes to $-\infty$. For this purpose we prove the following result.

\begin{prop}\label{growthinf}
Consider a stratum $\Delta_J$ such that $\fh\oplus\fm_J$ is not a subalgebra. Then for every $v\in\Sph_J$ and $a>0$, there exist an open neighborhood $U(v)$ in $\Sph$ and a positive number $\epsilon(v)$ such that $S(\gamma_u(t))<-a$ whenever $u\in U(v)$ and $(1-\e(v))t_u<t<t_u$.
\end{prop}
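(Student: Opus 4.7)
The plan is to argue by contradiction, reducing the uniform statement to a continuity argument combined with the blow-up of a single term in the scalar curvature formula.

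Suppose the conclusion fails for some $v\in\Sph_J$ and $a>0$. Then for each $n$ there exist $u_n\in\Sph$ with $\|u_n-v\|<1/n$ and $t_n\in\bigl((1-1/n)t_{u_n},t_{u_n}\bigr)$ such that $S(\gamma_{u_n}(t_n))\ge -a$. The first step is to show that $u\mapsto t_u=1/(r\max_i d_iT_iu_i)$ is continuous at~$v$: the maximum is continuous in~$u$ and is strictly positive at~$v$, since $\sum_i d_iT_iv_i=0$ together with $\sum_i v_i^2=1$ forces some $v_i>0$. Hence $t_{u_n}\to t_v$, and the squeezing $(1-1/n)t_{u_n}<t_n<t_{u_n}$ forces $t_n\to t_v$. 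Therefore
\begin{equation*}
y^{(n)}:=\gamma_{u_n}(t_n)=v_0-t_nu_n\longrightarrow v_0-t_v v=\gamma_v(t_v)=:y^*\in\Delta_J,
\end{equation*}
so $y^{(n)}_i\to y^*_i>0$ for $i\in J$ and $y^{(n)}_i\to 0$ for $i\in J^c$, with each $y^{(n)}_i>0$ throughout (since $t_n<t_{u_n}$ keeps $\gamma_{u_n}(t_n)$ in the open simplex).

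The key step is then to show $S(y^{(n)})\to -\infty$, which contradicts $S(y^{(n)})\ge -a$. Since $\fh\oplus\fm_J$ is not a subalgebra, fix indices $i_0,j_0\in J$ and $k_0\in J^c$ with $[i_0 j_0 k_0]>0$. In the scalar curvature formula~\eqref{scalar_y}, the linear term $\tfrac12\sum_i d_ib_iy^{(n)}_i$ stays bounded on the compact set~$\bar\Delta$. Crucially, every summand $[ijk]\,y^{(n)}_iy^{(n)}_j/y^{(n)}_k$ is non-negative, so the full sum $\sum_{i,j,k}[ijk]\,y^{(n)}_iy^{(n)}_j/y^{(n)}_k$ is bounded below by its single $(i_0,j_0,k_0)$ entry. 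That entry has numerator converging to $y^*_{i_0}y^*_{j_0}>0$ and denominator converging to~$0^+$, so it diverges to~$+\infty$; consequently $-\tfrac14$ times the full sum diverges to~$-\infty$, and the contradiction follows.

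The main subtlety is why a contradiction argument is preferable to a direct quantitative estimate. For $u$ perturbed from $v$, the actual vanishing set at the endpoint $\gamma_u(t_u)$ may be a proper subset of~$J^c$, and the enlarged index set could even form a subalgebra-shaped pattern; so along a nearby trajectory the specific negative term $[i_0 j_0 k_0]\,y_{i_0}y_{j_0}/y_{k_0}$ need not blow up uniformly in~$u$. The contradiction framework sidesteps this by passing to the limit of entire endpoints $\gamma_{u_n}(t_n)\to y^*\in\Delta_J$ directly, using only the softer fact that each coordinate indexed by $J^c$ tends to zero along the sequence.
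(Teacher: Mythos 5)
Your proof is correct, but it takes a genuinely different route from the paper's. The paper argues directly and constructively: it writes down an explicit $\epsilon(v)$ in terms of $[ijk]$, $t_v$, $A=\frac{\max_ib_i}{\min_iT_i}$ and $a$, and then chooses $U(v)$ so that two explicit inequalities on the numerator and the denominator of the offending term $[ijk]\,y_iy_j/y_k$ hold; combining these with the crude upper bound $\frac12\sum d_ib_iy_i\le A$ yields $S(\gamma_u(t))<-a$ with no limiting argument. Your approach instead negates the conclusion, extracts sequences $u_n\to v$ and $t_n\to t_v$ (using continuity and positivity of $u\mapsto t_u$ near $v$), passes to the limit $y^{(n)}\to y^*\in\Delta_J$, and lets a single non-negative term $[i_0j_0k_0]\,y^{(n)}_{i_0}y^{(n)}_{j_0}/y^{(n)}_{k_0}$ blow up while the linear term stays bounded on the compact set $\bar\Delta$. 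Both are valid; the trade-off is the usual one. Your compactness argument is shorter and avoids fiddly bookkeeping with $u_i^+$ and the two neighborhood inequalities, while the paper's explicit estimate produces a concrete $\epsilon(v)$ and is perhaps more readily adapted if one later needs quantitative control (for instance, uniformity of $\epsilon$ over compact subsets of $\Sph_J$).

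One small remark on your closing paragraph: your concern that "the specific negative term need not blow up uniformly in $u$" is not actually an obstacle to a direct argument --- the paper handles exactly this by shrinking $U(v)$ until the denominator $v_{0k}-tu_k$ is pinned down small. The genuine advantage of your route is merely that it avoids having to exhibit the bound explicitly, not that a direct estimate would fail. Also, for completeness one should note that the reason the full sum $\sum_{i,j,k}[ijk]\,y_iy_j/y_k$ is bounded below by the single $(i_0,j_0,k_0)$ entry is precisely the non-negativity of every $[ijk]$ and every $y_i$; you state this, and it is exactly where the hypothesis "$\fh\oplus\fm_J$ is not a subalgebra" enters (it guarantees such a triple with $[i_0j_0k_0]>0$ exists).
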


\begin{proof}
Let $A=\frac{\max_ib_i}{\min_i T_i}$ as before. There exist $i,j\in J$ and $k\in J^c$ such that $[ijk]\ne0$. Moreover, $(v_{0i}-t_vv_{i})(v_{0j}-t_vv_{j})>0$ and $(v_{0k}-t_vv_{k})=0$. Define
$$\e(v)=\min\Big\{[ijk]\frac{(v_{0i}-t_vv_i^+)(v_{0j}-t_vv_j^+)}{4(A+a)
t_vv_k},\frac12\Big\},$$
where $v_i^+=\max\{v_i,0\}$.
Evidently, this quantity is always positive. Choose a neighborhood $U(v)$ of $v$ in $\Sph$ such that $u_k>0$ and
\begin{align*}
(v_{0i}&-t_uu_i^+)(v_{0j}-t_uu_j^+)>\tfrac12(v_{0i}-t_vv_i^+)(v_{0j}-t_vv_j^+)\qquad \mbox{and}
\\
v_{0k}&-(1-\e(v))t_uu_k<2(v_{0k}-(1-\e(v))t_vv_k)=2(v_{0k}-t_v v_k +\e(v)t_vv_k)=2\e(v)t_vv_k
\end{align*}
for all $u=(u_1,\ldots,u_r)\in U(v)$. This implies
\begin{align*}
S(\gamma_u(t))&\le A -[ijk]\frac{(v_{0i}-tu_i)(v_{0j}-tu_j)}{v_{0k}-tu_k}
\\
&<A-[ijk]\frac{(v_{0i}-t_uu_i^+)(v_{0j}-t_uu_j^+)}{v_{0k}-(1-\e(v))t_uu_k}\le-a,
\end{align*}
provided $u\in U(v)$ and $(1-\e(v))t_u<t<t_u$.
\end{proof}

If the space $G/H$ has pairwise inequivalent isotropy summands, Proposition~\ref{growthinf} implies the following result, originally proved in~\cite{AP16}. 

\begin{cor}\label{max_H}
	If $\fh$ is maximal in $\fg$,  then $S_{|\M_T}$ attains its global maximum at a metric ${g\in \M_T}$, and hence $\Ric(g)=cT$ for some $c>0$.
\end{cor}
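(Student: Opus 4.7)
Since $\fh$ is maximal in $\fg$, there is no intermediate subalgebra strictly between $\fh$ and $\fg$. Equivalently, for every proper nonempty subset $J\subsetneq I=\{1,\ldots,r\}$, the subspace $\fh\oplus\fm_J$ is not a subalgebra of~$\fg$. Since the isotropy summands are pairwise inequivalent, we have $\M=\M^D$ and $\M_T$ is parametrized by the open simplex~$\Delta$, so it suffices to show that $S_{|\M_T^D}$ attains its supremum.

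The plan is to establish uniform decay of $S$ near the boundary $\partial\Delta$ via the geodesic parametrization $\gamma_v\colon[0,t_v]\to\bar\Delta$ for $v\in\Sph$. By \pref{infinity}, $S_{|\M_T^D}$ is bounded above by a constant $A$; pick any $g_0\in\M_T^D$ and set $a=|S(g_0)|+1$. Since $\fh$ is maximal, \pref{growthinf} applies to \emph{every} stratum $\Delta_J$ with $J\ne I$. Hence for each $v\in\Sph$ it produces a neighborhood $U(v)\subset\Sph$ and a number $\e(v)>0$ such that $S(\gamma_u(t))<-a<S(g_0)-1$ whenever $u\in U(v)$ and $(1-\e(v))t_u<t<t_u$.

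Since $\Sph$ is compact, we can extract a finite subcover $U(v_1),\ldots,U(v_N)$ and let $\e^*=\min_i\e(v_i)>0$. Then $S(\gamma_u(t))<S(g_0)-1$ for every $u\in\Sph$ and every $t\in((1-\e^*)t_u,t_u)$. Consequently, the set
\begin{equation*}
\Ko=\{\gamma_u(t)\mid u\in\Sph,\ 0\le t\le (1-\e^*)t_u\}\subset\Delta
\end{equation*}
is the continuous image of a compact set and hence compact, it contains $g_0$ (at $t=0$), and $S<S(g_0)$ on its complement $\Delta\setminus\Ko$. Therefore
\begin{equation*}
\sup_{\M_T^D}S=\sup_{\Ko}S,
\end{equation*}
and the right-hand side is attained at some $g\in\Ko\subset\M_T^D$ by continuity of $S$ on $\Delta$ and compactness of~$\Ko$.

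By \pref{critical}, this global maximum $g$ is a critical point of $S_{|\M_T}$, so $\Ric(g)=cT$ for some $c\in\R$. Finally, as remarked after~\eqref{prescribe}, a compact homogeneous space that is not a torus admits no metric with $\Ric\le0$, so $c>0$. The only subtlety in this argument is the passage from the pointwise control in \pref{growthinf} to a uniform $\e^*$, which is handled cleanly by compactness of $\Sph$; no estimate is needed on a maximizing sequence itself.
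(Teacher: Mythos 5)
Your compactness argument via a finite subcover of $\Sph$ is exactly the derivation from Proposition~\ref{growthinf} that the paper leaves implicit, and you carry it out correctly: the uniform $\e^*$ gives a compact $\Ko\subset\Delta$ outside of which $S$ is below $S(g_0)$, so the supremum is attained on~$\Ko$.

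One caveat: you write ``Since the isotropy summands are pairwise inequivalent'' as though it were given, but the corollary only assumes $\fh$ is maximal in $\fg$. These conditions are logically independent: a maximal $\fh$ can a priori have an isotropy representation with equivalent irreducible summands, in which case $\M$ strictly contains $\M^D$ and your argument only controls a single diagonal slice. The paper's own framing sentence makes the same restriction (``If the space $G/H$ has pairwise inequivalent isotropy summands, Proposition~\ref{growthinf} implies the following result, originally proved in~\cite{AP16}''), so you are on the same footing as the text; but a self-contained proof of the corollary as stated would instead appeal to Proposition~\ref{alg_control_3} and Corollary~\ref{cor_equiv_est}: maximality of $\fh$ means there are no intermediate subalgebras, so $\alpha_{G/H}=\sup\emptyset=-\infty$, and every nonempty superlevel set $\{g\in\M_T\mid S(g)\ge a\}$ is compact, which yields the maximum. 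It would be worth flagging this rather than silently assuming inequivalence of the summands.
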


We can add a marking to the strata in $\partial\Delta$. If $\fh\oplus\fm_J=\fk$ is a subalgebra, we denote the stratum $\Delta_J$ by $(\Delta_J, \fk)$ or simply $\Delta_\fk$; if it is not, we denote the stratum by~$(\Delta_J,\infty)$ or~$\Delta_\infty$.

Next, we need an estimate for $S$ near $(\Delta_J, \fk)$. Let $K$ be the connected subgroup of $G$ with Lie algebra~$\fk$. Define
\begin{align*}
\alpha_\fk^D=\sup\{S(h)\mid  h\in \M^D(K/H) \text{ with } \tr_hT_{|\fk\cap\fm}=1\},
\end{align*}
where the letter $D$ is preserved for the decomposition of the tangent space to $K/H$ induced by~$D$. 
Since a normal homogeneous metric has non-negative scalar curvature, $\alpha_\fk^D\ge 0$. Also, $\alpha_\fk^D=0$ if and only if $K/H$ is flat. It is important for us to note that, since $\Delta_\fk$ is not closed in general, the supremum in the definition of $\alpha_\fk^D$ may not be achieved in~$\Delta_\fk$, which will complicate our discussion.

Consider a metric in $\M_T^D$ identified with $y=(y_1,\ldots,y_r)\in\Delta$. If $\fk=\fh\oplus\fm_J$ for some $J\subset I$, then 
\begin{align*}
y=y_{|\fk\cap\fm}+y_{|\fk^\perp},\qquad y_{|\fk\cap\fm}=\sum_{i\in J} \tfrac1{y_i}Q_{|\fm_i},\qquad y_{|\fk^\perp}=\sum_{i\in J^c}\tfrac1{y_i}Q_{|\fm_i}.
\end{align*}
We may regard $y_{|\fk\cap\fm}$ as a metric on $K/H$. Its scalar curvature is given by
\begin{align*}
S(y_{|\fk\cap\fm})=\frac12\sum_{i\in J}d_i\bar b_iy_i-\frac14\sum_{i,j,k\in J}[ijk]\frac{y_iy_j}{y_k},
\end{align*}
where the Killing form of $K$ restricted to $\fm_i$ equals $- \bar b_i Q_{|\fm_i} $. One easily shows that
\begin{equation*}
\bar b_i=b_i-\sum_{j,k\in J^c}\frac{[ijk]}{d_i}.
\end{equation*}

\begin{prop}\label{growthalg}
Consider a stratum $(\Delta_J,\fk)$. If  $y\in\M_T^D$ satisfies $\displaystyle\max_{i\in J^c} y_i\le \e$, then
$$S(y)\le \alpha_\fk^D+\e\sum_{i\in J^c}\frac{d_ib_i}2.$$ 
\end{prop}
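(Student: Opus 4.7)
My plan is to compare $S(y)$ with the scalar curvature of the restricted metric $y_{|\fk\cap\fm}$, viewed as a homogeneous metric on $K/H$, and then invoke the definition of $\alpha_\fk^D$ after a rescaling.

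First, I would identify which structure constants can be nonzero. Since $\fk=\fh\oplus\fm_J$ is a subalgebra we have $[\fm_J,\fm_J]\subset\fk$, and since $\fk^\perp=\fm_{J^c}$ is $\Ad_K$-invariant we have $[\fm_J,\fm_{J^c}]\subset\fm_{J^c}$. Combined with the $S_3$-symmetry of $[ijk]$, these inclusions force $[ijk]=0$ whenever exactly one of $i,j,k$ lies in $J^c$. I would use this to split the cubic sum in $S(y)$ according to whether all three indices lie in $J$, all three lie in $J^c$, or exactly one lies in $J$ and the other two in $J^c$. Substituting the identity $d_i\bar b_i=d_ib_i-\sum_{j,k\in J^c}[ijk]$ into the formula for $S(y_{|\fk\cap\fm})$ and forming the difference, the ``all in $J$'' pieces cancel together with the linear-in-$J$ pieces, leaving
\begin{align*}
S(y)-S(y_{|\fk\cap\fm})={}&\tfrac12\sum_{i\in J^c}d_ib_iy_i+\tfrac12\!\!\sum_{\substack{i\in J\\ j,k\in J^c}}\![ijk]\,y_i\Big(1-\tfrac{y_j}{y_k}\Big)\\
&-\tfrac14\!\sum_{\substack{k\in J\\ i,j\in J^c}}\![ijk]\tfrac{y_iy_j}{y_k}-\tfrac14\!\sum_{i,j,k\in J^c}\![ijk]\tfrac{y_iy_j}{y_k},
\end{align*}
after merging the two mixed contributions with the $J$-index at position $i$ or $j$ via $[ijk]=[jik]$.

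The decisive step is to show that the cross term is non-positive. For fixed $i\in J$, symmetrizing in $j,k\in J^c$ using $[ijk]=[ikj]$ and applying AM--GM gives
\[
\sum_{j,k\in J^c}[ijk]\tfrac{y_j}{y_k}=\tfrac12\sum_{j,k\in J^c}[ijk]\Big(\tfrac{y_j}{y_k}+\tfrac{y_k}{y_j}\Big)\ge\sum_{j,k\in J^c}[ijk],
\]
so $\sum_{j,k\in J^c}[ijk](1-y_j/y_k)\le 0$. The remaining two cubic sums are manifestly non-positive since $[ijk],y_i\ge 0$. Hence the hypothesis $y_i\le\e$ for $i\in J^c$ yields
\[
S(y)-S(y_{|\fk\cap\fm})\le \tfrac12\sum_{i\in J^c}d_ib_iy_i\le \tfrac{\e}{2}\sum_{i\in J^c}d_ib_i.
\]

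To finish I would bound $S(y_{|\fk\cap\fm})$ by $\alpha_\fk^D$ by rescaling. Setting $\lambda=\sum_{i\in J}d_iT_iy_i$, the trace constraint $\sum_i d_iT_iy_i=1$ together with positivity of the remaining terms gives $\lambda\le 1$, and $\lambda^{-1}y_{|\fk\cap\fm}$ lies in $\M_T^D(K/H)$. Since scalar curvature scales as $S(\mu g)=\mu^{-1}S(g)$, we have $S(y_{|\fk\cap\fm})=\lambda\,S(\lambda^{-1}y_{|\fk\cap\fm})\le \lambda\alpha_\fk^D\le\alpha_\fk^D$, the last inequality using $\alpha_\fk^D\ge 0$. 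Combining with the previous estimate yields the proposition. The only genuinely delicate point is recognizing the AM--GM cancellation in the cross term; the rest is bookkeeping organized by the subalgebra condition on $\fk$.
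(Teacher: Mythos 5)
Your argument matches the paper's proof essentially step for step: the same split of the cubic sum using the vanishing of $[ijk]$ when exactly one index lies in $J^c$, the same substitution $d_i\bar b_i = d_ib_i-\sum_{j,k\in J^c}[ijk]$, the same AM--GM estimate $\tfrac{y_j}{y_k}+\tfrac{y_k}{y_j}\ge 2$ to absorb the cross term, and the same rescaling by $\lambda=\tr_{y_{|\fk\cap\fm}}T_{|\fk\cap\fm}\le 1$ to land in $\M_T^D(K/H)$ and invoke $\alpha_\fk^D$. The only blemish is notational in the final step: under the paper's convention that $y_{|\fk\cap\fm}$ denotes a metric, the rescaled metric with unit trace is $\lambda\,y_{|\fk\cap\fm}$, not $\lambda^{-1}y_{|\fk\cap\fm}$, though the chain $S(y_{|\fk\cap\fm})=\lambda\,S(\cdot)\le\lambda\alpha_\fk^D\le\alpha_\fk^D$ is correct and, using $\lambda\le 1$ and $\alpha_\fk^D\ge 0$, even slightly cleaner than the paper's strict inequality.
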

\begin{proof}
We break up the formula for the scalar curvature in~\eqref{scalar_y} as follows, using the assumption that $[ijk]=0$ for $i,j\in J$ and $k\in J^c$:
\begin{align*}
S(y)&=\frac12\sum_{i\in J}d_ib_iy_i + \frac12\sum_{i\in J^c}d_ib_iy_i-\frac14\sum_{i,j,k\in J} [ijk]\frac{y_iy_j}{y_k}
\\
&\hphantom{=}~-\frac12\sum_{i\in J}\sum_{j,k\in J^c}[ijk]\frac{y_iy_j}{y_k}
-\frac14\sum_{i\in J}\sum_{j,k\in J^c}[ijk]\frac{y_jy_k}{y_i}-	 \frac14\sum_{i,j, k\in J^c}[ijk]\frac{y_iy_j}{y_k}  \\
&\le\frac12\sum_{i\in J}d_i{\bar{b}}_iy_i+ \frac12 \sum_{i\in J}\sum_{j,k\in J^c}[ijk]y_i  + \frac12\sum_{i\in J^c}d_ib_iy_i-\frac14\sum_{i,j,k\in J} [ijk]\frac{y_iy_j}{y_k} \\
&\hphantom{=}~-\frac14\sum_{i\in J}\sum_{j,k\in J^c}[ijk]y_i\Big(\frac{y_j}{y_k}+\frac{y_k}{y_j}\Big)
\\ &\le S(y_{|\fk\cap\fm}) + \frac12\sum_{i\in J^c}d_ib_iy_i,
\end{align*}
where in the last step we used  the estimate $\frac{y_j}{y_k}+\frac{y_k}{y_j}\ge 2$. Now observe that
\begin{align*}
S(y_{|\fk\cap\fm})=(\tr_{y_{|\fk\cap\fm}}T_{|\fk\cap\fm})S((\tr_{y_{|\fk\cap\fm}}T_{|\fk\cap\fm})y_{|\fk\cap\fm})<S((\tr_{y_{|\fk\cap\fm}}T_{|\fk\cap\fm})y_{|\fk\cap\fm})\le \alpha_\fk^D
\end{align*}
since $\tr_{y_{|\fk\cap\fm}}T_{|\fk\cap\fm}<\tr_yT=1$ and the trace of $T_{|\fk\cap\fm}$ with respect to $(\tr_{y_{|\fk\cap\fm}}T_{|\fk\cap\fm})y_{|\fk\cap\fm}$ equals~1. Consequently,
\begin{align*}
S(y) < \alpha_\fk^D + \frac12\sum_{i\in J^c}d_ib_iy_i.
\end{align*}
When $y_i<\e$ for all $i\in J^c$, we get the desired result.
\end{proof}

We can reformulate \pref{growthalg} as follows.

\begin{cor}\label{local_control} 
Let  $\Delta_\fk^D$ be  a  subalgebra stratum. Then for every $a>\alpha_\fk^D$ there exists a constant $\e>0$ such that the set
$\{g\in\M_T^D \mid S(g)\ge a\}$ does not intersect $  T_\e(\Delta_\fk)$.
\end{cor}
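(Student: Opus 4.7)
The corollary is an immediate packaging of \pref{growthalg}. The plan is to choose $\e$ small enough that the bound provided by that proposition already forces $S(y)<a$ on the tubular neighborhood $T_\e(\Delta_\fk)$.

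More precisely, I would set
\begin{equation*}
C_\fk=\sum_{i\in J^c}\frac{d_ib_i}{2},
\end{equation*}
which is a non-negative constant depending only on $\fk$ and the decomposition $D$, and choose
\begin{equation*}
\e=\frac{a-\alpha_\fk^D}{C_\fk+1}.
\end{equation*}
(The $+1$ in the denominator handles the degenerate case $C_\fk=0$, which occurs when all $b_i$ with $i\in J^c$ vanish; one could equally well use any positive number in that case.) By hypothesis $a>\alpha_\fk^D$, so $\e>0$. For any $y\in T_\e(\Delta_\fk)\cap\M_T^D$ we have $y_i\le\e$ for all $i\in J^c$, so \pref{growthalg} gives
\begin{equation*}
S(y)\le \alpha_\fk^D+\e\,C_\fk\le \alpha_\fk^D+\frac{(a-\alpha_\fk^D)C_\fk}{C_\fk+1}<a.
\end{equation*}
Hence no such $y$ lies in $\{g\in\M_T^D\mid S(g)\ge a\}$, which is exactly the desired conclusion.

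There is essentially no obstacle here: the entire content is already contained in \pref{growthalg}, and the corollary is just a contrapositive reformulation obtained by choosing a threshold small enough to absorb the linear error term $\e\sum_{i\in J^c}d_ib_i/2$. The only minor point worth noting in the write-up is that the bound in \pref{growthalg} is \emph{uniform} in $y\in T_\e(\Delta_\fk)$, which is precisely what allows us to convert the pointwise estimate near the stratum into the uniform exclusion statement claimed by the corollary.
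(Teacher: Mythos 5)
Your proof is correct and follows exactly the approach the paper intends: the corollary is stated as a direct reformulation of \pref{growthalg}, and your explicit choice of $\e=(a-\alpha_\fk^D)/(C_\fk+1)$ is the natural way to make that reformulation precise.
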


Combining Propositions~\ref{growthinf} and~\ref{growthalg}, we arrive at the following conclusion. 

\begin{cor}\label{cor_est_fixed_D}
Suppose $a>\alpha_\fk^D$ for every subalgebra stratum $\Delta_\fk$. Then $\{g\in\M_T^D \mid S(g)\ge a\}$ is a (possibly empty) compact subset of~$\M_T^D$.
\end{cor}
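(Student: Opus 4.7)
The plan is a sequential-compactness argument: take an arbitrary sequence $(y^{(n)})$ in the sublevel set $\{g\in\M_T^D\mid S(g)\ge a\}$, extract a subsequence converging in the compact closure $\bar\Delta$ to some $y^*$, and show $y^*\in\Delta$. Since $S$ is continuous on $\Delta\cong\M_T^D$, this would prove the sublevel set is closed in $\Delta$ and relatively compact in $\bar\Delta$, hence compact in $\M_T^D$. The work lies entirely in ruling out $y^*\in\partial\Delta$, and by Proposition~\ref{infinity} each boundary stratum $\Delta_J$ is either marked by a subalgebra $\fk=\fh\oplus\fm_J$ or by the symbol $\infty$.

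Suppose first that $y^*\in\Delta_\fk$ for a subalgebra stratum. The hypothesis $a>\alpha_\fk^D$ feeds directly into Corollary~\ref{local_control}, producing $\e>0$ with $T_\e(\Delta_\fk)\cap\{g\in\M_T^D\mid S(g)\ge a\}=\emptyset$. Since $y^{(n)}\to y^*$ and $y^*_i=0$ for all $i\in J^c$, the coordinates $y^{(n)}_i$ for $i\in J^c$ eventually fall below $\e$; that is, eventually $y^{(n)}\in T_\e(\Delta_\fk)$, which contradicts $S(y^{(n)})\ge a$.

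Suppose next that $y^*\in(\Delta_J,\infty)$. I would parametrize $\bar\Delta\setminus\{v_0\}$ by the radial coordinates $(v,t)\mapsto v_0-tv$, writing $y^{(n)}=\gamma_{v^{(n)}}(t_n)$ and $y^*=\gamma_v(t_v)$ with $v\in\Sph_J$. A short check using the explicit formula $t_v=(r\max_i d_iT_iv_i)^{-1}$ shows $t_v$ is continuous and strictly positive on $\Sph$, since the denominator cannot vanish: the defining conditions $\sum d_iT_iv_i=0$ and $\sum v_i^2=1$ preclude $v_i\le 0$ for all $i$. Consequently $v^{(n)}\to v$, $t_n\to t_v$, and $t_n/t_{v^{(n)}}\to 1$. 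Applying Proposition~\ref{growthinf} at $v$ with $\max(a,0)+1$ in place of its parameter, I obtain a neighborhood $U(v)\subset\Sph$ and $\e(v)>0$ with $S(\gamma_u(t))<-\max(a,0)-1$ whenever $u\in U(v)$ and $(1-\e(v))t_u<t<t_u$. For $n$ large, these conditions are met by $(v^{(n)},t_n)$, giving $S(y^{(n)})<a$, again a contradiction.

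The main obstacle is really just the continuity of the radial parametrization at a boundary limit $y^*\in\partial\Delta$; concretely, one must verify $t_n/t_{v^{(n)}}\to 1$ and not merely $t_n\to t_v$, and this is where the explicit form of $t_v$ and its uniform positivity on $\Sph$ enter. Once this is in place, the two boundary cases are resolved by plugging directly into Corollary~\ref{local_control} and Proposition~\ref{growthinf}, respectively, and no additional ingredients are needed.
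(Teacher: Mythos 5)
Your proof is correct and is a reasonable way to flesh out the paper's one-line ``combine Propositions~\ref{growthinf} and~\ref{growthalg}.'' You use Corollary~\ref{local_control} (a restatement of Proposition~\ref{growthalg}) for the subalgebra strata and Proposition~\ref{growthinf} for the $\infty$-strata, via a sequential-compactness argument on $\bar\Delta$; the radial-parametrization point you flag --- that $t_v$ is continuous and strictly positive on the compact sphere~$\Sph$, so that $t_n\to t_v$ and $t_n/t_{v^{(n)}}\to1$ --- is indeed the technical detail that must be checked, and you resolve it correctly.

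One small slip: in the second case you feed $\max(a,0)+1$ into Proposition~\ref{growthinf}, which yields $S(y^{(n)})<-1$ when $a\le0$; this contradicts $S(y^{(n)})\ge a$ only if $a>-1$. Since $\alpha_{\fk}^D\ge 0$, the hypothesis forces $a>0$ whenever at least one subalgebra stratum exists, so the gap can arise only when the fixed decomposition $D$ admits no subalgebra strata and $a\le -1$. Replacing $\max(a,0)+1$ by $|a|+1$ closes it without changing anything else.
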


\pref{growthalg} shows that $\alpha_\fk^D$ is an upper bound for the possible values of the scalar curvature as we approach points in~$\Delta_\fk$. However, it is important to keep in mind that $S$ does not, in general, extend continuously to the closure of~$\Delta$.

We end this section with the following observation. Recall that even if $G$ and $H$ are connected and compact, and if $\fk$ is an intermediate subalgebra, then the connected (intermediate) subgroup with Lie algebra $\fk$ is not necessarily compact. 

\begin{lem}\label{closure}
If $K$ is an intermediate subgroup with Lie algebra~$\fk$, then the closure $\bar K$ is compact and $\alpha_{\bar \fk}^D=\alpha_\fk^D$, where $\bar\fk$ is the Lie algebra of~$\bar K$.
\end{lem}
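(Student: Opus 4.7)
The plan is first to dispose of the compactness claim immediately: $\bar K$ is a closed subset of the compact Lie group $G$ (the closure of a subgroup is itself a subgroup), hence compact, and by Cartan's closed subgroup theorem it is a Lie subgroup whose Lie algebra $\bar\fk$ contains $\fk$. The structural input I would then invoke for the equality $\alpha_{\bar\fk}^D=\alpha_\fk^D$ is the standard fact that $\bar K/K$ is a torus (closures of connected subgroups of compact Lie groups), which forces a $Q$-orthogonal direct sum decomposition $\bar\fk=\fk\oplus\mathfrak{v}$ with $\mathfrak{v}$ abelian and lying in the center of $\bar\fk$. In particular $[\mathfrak{v},\bar\fk]=0$, so the Killing form of $\bar K$ vanishes on $\mathfrak{v}$ and agrees with that of $K$ on $\fk$. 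Since each $\fm_i$ is $\Ad_H$-irreducible, after a reshuffle within isotypic components I may arrange $\bar\fk\cap\fm=\fm_{J'}$ for some $J'\supseteq J$ with $\{\fm_i\}_{i\in J'\setminus J}\subseteq\mathfrak{v}$.

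Once this setup is in place, the scalar curvature of a diagonal metric $h=\sum_{i\in J'}\frac{1}{y_i}Q_{|\fm_i}$ on $\bar K/H$ collapses to that of its ``restriction'' $h^\circ=\sum_{i\in J}\frac{1}{y_i}Q_{|\fm_i}$ on $K/H$: the Killing coefficients of $\bar K$ vanish on $J'\setminus J$ and agree with those of $K$ on $J$, while $[ijk]=0$ whenever some index lies in $J'\setminus J$ by centrality of $\mathfrak{v}$, so formula~\eqref{scalar_y} yields $S(h)=S(h^\circ)$. To deduce $\alpha_{\bar\fk}^D\le\alpha_\fk^D$, I would set $\lambda=\sum_{i\in J}d_iT_iy_i\in(0,1]$, note that $\lambda h^\circ\in\M^D(K/H)$ is constraint-normalized with $S(\lambda h^\circ)=S(h^\circ)/\lambda$, and conclude $S(h)=\lambda S(\lambda h^\circ)\le\lambda\alpha_\fk^D\le\alpha_\fk^D$ (using $\alpha_\fk^D\ge 0$, which follows from non-negativity of the scalar curvature of normal homogeneous metrics). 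For the reverse inequality, given $h^\circ\in\M^D(K/H)$ with $\tr_{h^\circ}T_{|\fk\cap\fm}=1$, I would extend it to $\bar K/H$ by setting $y_i=\delta$ for $i\in J'\setminus J$, rescale by the factor $1+\delta C$ (where $C=\sum_{i\in J'\setminus J}d_iT_i$) to meet $\tr T=1$, and let $\delta\to 0^+$: the scalar curvature of the rescaled metric equals $S(h^\circ)/(1+\delta C)$ and hence tends to $S(h^\circ)$, producing $\alpha_{\bar\fk}^D\ge S(h^\circ)$ and therefore the reverse bound after taking the supremum over $h^\circ$.

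The principal obstacle is securing the structural claim about $\bar\fk$ in a form compatible with the fixed decomposition $D$. If $D$ contains equivalent isotropy summands, $\bar\fk\cap\fm$ could contain ``diagonal'' $\Ad_H$-invariant lines that fail to be sums of the original $\fm_i$'s; verifying that a suitable reshuffle within isotypic components realizes $\bar\fk\cap\fm=\fm_{J'}$, and that the resulting decomposition is still one of the $D\in\DD$ the argument allows us to work with, is the only non-routine step. All remaining manipulations of~\eqref{scalar_y} are direct.
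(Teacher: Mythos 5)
Your proof follows essentially the same route as the paper's: write $\bar\fk=\fk\oplus\mathfrak v$ with $\mathfrak v$ central and abelian, observe that the central piece contributes nothing to the scalar curvature, and deduce the equality of the two suprema. The paper obtains this structure by writing $\fk=\fk_{\mathrm s}\oplus\fz$, showing $\bar K=K_{\mathrm s}\cdot\bar Z$ with $\bar Z$ a torus, and concluding that $[\bar\fz,\bar\fz]=[\bar\fz,\fk]=0$; your treatment of the trace normalization (rescaling by $\lambda\le1$ for one inequality, pushing $\delta\to0$ for the other) is actually more explicit than what is written in the paper, which leaves this step implicit. Two small cautions. First, the phrase ``$\bar K/K$ is a torus'' is not literally correct (since $K$ is dense in $\bar K$, the quotient is not Hausdorff); the precise statement you need is that $[\bar\fk,\bar\fk]=[\fk,\fk]=\fk_{\mathrm s}$, so the $Q$-orthogonal complementary ideal $\mathfrak v\subset\bar\fz$ is central and abelian — which is exactly what the paper's $\bar K=K_{\mathrm s}\cdot\bar Z$ produces directly. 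Second, the issue you flag about aligning $\bar\fk\cap\fm$ with the $\fm_i$'s of the fixed $D$ is real, but the paper passes over it silently as well (its final line works with $\M$ rather than $\M^D$), so you are not worse off on this point.
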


\begin{proof}
	Let $\fk=\fk_{\mathrm s}\oplus\fz$ with $\fk_{\mathrm s}$ semisimple and $\fz$ the center of $\fk$. Suppose that $K_{\mathrm s}$ and $Z$ are the connected Lie subgroups of $G$ with Lie algebras $\fk_{\mathrm s}$ and~$\fz$. Then $K=K_{\mathrm s}\cdot Z$ (i.e., $K$ is the quotient of $K_{\mathrm s}\times Z$ by a discrete subgroup of the center of~$K_{\mathrm s}\times Z$) and $K_{\mathrm s}$ is compact. The closure $\bar Z$ is compact abelian, and hence a torus. Denote its Lie algebra by~$\bar\fz$. Thus $\bar K=K_{\mathrm s}\cdot \bar Z$. If $g\in\M(K/H)$, then any extension $\bar g\in\M ({\bar K}/H) $ satisfies $S(\bar g)=S(g)$ since $[\bar \fz, \bar \fz]=[\bar \fz,\fk]=0$.
\end{proof}

Thus it is sufficient to compute the invariants $\alpha_\fk^D$ only for intermediate subalgebras for which $K$ is compact.

\bigskip

\section{Riemannian submersions}\label{sec_submersions}

\smallskip

In this section we study the behavior of $S$ near the subalgebra stratum $\Delta_\fk$ geometrically. It will be more convenient to choose the path $g_t$ below instead of $\gamma_v(t)$ since  we can then  use formula~\eqref{scalar_sub} for Riemannian submersions. The goal is to see if there are metrics near $\Delta_\fk$ whose scalar curvature is larger than~$\alpha_\fk^D$. As before, suppose $K$ is an intermediate connected subgroup with Lie algebra~$\fk$ and associated stratum~$\Delta_\fk$. Thus $\mathfrak k=\fh\oplus\fm_J$ for some $J\subset I$. Define
\begin{equation*}
\beta_\fk^D=\sup\{S(h)\mid h\in\M^D(G/K) \text{ with } \tr_hT_{|\fk^\perp}=1\}.
\end{equation*}
We will show that $\beta_\fk-\alpha_\fk$ controls the desired behavior.

We have the homogeneous fibration 
\begin{equation}\label{fibs}
F=K/H\to  G/H \to G/K=B.
\end{equation}
Let us consider metrics on $G/H$ for which this fibration is a Riemannian submersion.
We start with a metric of the form
\begin{equation*}
g=g_F+g_B=\sum_{i\in J}\tfrac1{y_i}Q_{|\fm_i}+\sum_{i\in J^c}\tfrac1{y_i}Q_{|\fm_i}.
\end{equation*}
Assume that $g$ lies in $\M_T$, i.e., $T_1^*+T_2^*=1$, where
\begin{align*}
T_1^*=\tr_{g_F}T_{|\fk\cap\fm}=\sum_{i\in J} d_i y_i T_i,\qquad T_2^*=\tr_{g_B}T_{|\fk^\perp}=\sum_{i\in J^c} d_iy_i  T_i.
\end{align*}
We also require the metric $g_B$ to be $\Ad_K$-invariant so that the projection in~\eqref{fibs} is a Riemannian submersion with $g_F$ and $g_B$ the metrics on the fiber and the base.

Consider the two-parameter family
\begin{equation*}
g_{s,t}=\frac1s g_F +\frac1t  g_B \qquad \text{with}\qquad  sT_1^*+tT_2^*=1.
\end{equation*}
Substituting  $s=\frac{1-tT_2^*}{T_1^*}$, we obtain a one-parameter family of metrics
\begin{align}\label{def_gz}
g_t=\frac{T_1^*}{1-tT_2^*}\,g_F+\frac1t \,g_B
\end{align}
lying in $\M_T^D$.
We call this the {\it canonical variation} associated to $K$.
By~\eqref{scalar_sub}, the scalar curvature of $g_t$ is 
\begin{equation}\label{scal_g}
S(g_t)=s\,S_F +t\, S_B - \frac{t^2}{s}|A|_g =\frac{S_F}{T_1^*}+T_2^*\Big(\frac{S_B}{T_2^*}-\frac{S_F}{T_1^*}\Big)t-\frac{t^2T_1^*}{1-tT_2^*}|A|_g.
\end{equation}
Since $\lim_{t\to0}g_t= \frac{g_F}{T_1^*}\in \Delta_\fk$, every point in $ \Delta_\fk$ is a limit of such a path $g_t$.
Thus we have
\begin{equation}\label{derivative}
\lim_{t\to0}S(g_t)=\frac{S_F}{T_1^*} \qquad\text{and}\qquad
\lim_{t\to0}\frac{dS(g_t)}{d t}=T_2^*\bigg(\frac{S_B}{T_2^*}-\frac{S_F}{T_1^*}\bigg). 
\end{equation}
Notice that 
\begin{equation*}
\frac{S_F}{T_1^*}=S(T_1^*g_F)\le\alpha_\fk^D\qquad\text{and}\qquad \frac{S_B}{T_2^*}=S(T_2^*g_B)\le\beta_\fk^D.
\end{equation*}
We now use these formulas to understand the relationship between the numbers $\alpha_\fk^D$ corresponding to different strata.

\begin{prop}\label{prop_alpha_less_alpha}
	If $\Delta_{\fk'}$ is a stratum adjacent to~$\Delta_\fk$ with $\fk'\subset\fk$, then $\alpha_{\fk'}^D\le\alpha_\fk^D$.
\end{prop}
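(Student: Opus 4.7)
My plan is to realize any diagonal metric on $K'/H$ as the $t\to 0^+$ limit of a canonical variation of metrics in $\M_T^D(K/H)$, by applying the construction of \eqref{def_gz}--\eqref{scal_g} to the sub-fibration $K'/H\to K/H\to K/K'$ rather than to the fibration $K/H\to G/H\to G/K$ used in the section. Since $\fk'\subset\fk$ comes from an inclusion $J'\subset J$ of index sets, this sub-fibration fits the setup of Section~\ref{sec_submersions}.

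First I would fix an arbitrary $h_F\in\M^D(K'/H)$ with $\tr_{h_F}T_{|\fk'\cap\fm}=1$ and choose as base metric on $K/K'$ the restriction of the bi-invariant form, $h_B=Q_{|\fk\cap\fk'^\perp}=\sum_{i\in J\setminus J'}Q_{|\fm_i}$. Being proportional to $Q$, the metric $h_B$ is $\Ad_K$-invariant, hence $\Ad_{K'}$-invariant, so the projection $K/H\to K/K'$ is a Riemannian submersion with fiber metric $h_F$ and base metric $h_B$. Both $h_F$ and $h_B$ are diagonal with respect to $D$, so arbitrary combinations of them remain in $\M^D(K/H)$.

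Next, setting $T_1^*=1$ and $T_2^*=\tr_{h_B}T_{|\fk\cap\fk'^\perp}=\sum_{i\in J\setminus J'}d_iT_i$, I would form the canonical variation
\begin{equation*}
g_t=\frac{1}{1-tT_2^*}\,h_F+\frac{1}{t}\,h_B,
\end{equation*}
which lies in $\M_T^D(K/H)$ for every $t\in(0,1/T_2^*)$. The analogue of \eqref{scal_g} and the first limit in \eqref{derivative}, applied to the sub-fibration, then yield $\lim_{t\to 0^+}S(g_t)=S(h_F)/T_1^*=S(h_F)$. Since each $g_t\in\M_T^D(K/H)$ satisfies $S(g_t)\le\alpha_\fk^D$ by the very definition of the supremum, passing to the limit gives $S(h_F)\le\alpha_\fk^D$. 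Taking the supremum over all admissible $h_F$ produces the desired inequality $\alpha_{\fk'}^D\le\alpha_\fk^D$.

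The only point worth checking carefully is that the canonical-variation machinery of Section~\ref{sec_submersions} transfers verbatim from the fibration $K/H\to G/H\to G/K$ to the sub-fibration $K'/H\to K/H\to K/K'$. The $\Ad_{K'}$-invariance of $h_B$ is built in by the choice $h_B\propto Q$, and the possible non-closedness of the connected subgroup $K'\subset K$ is harmless, because the scalar curvature formula \eqref{scalar_sub} is a pointwise identity, as noted in the paper immediately after that equation. No additional estimates or compactness arguments are required.
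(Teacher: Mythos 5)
Your argument is correct and is essentially the paper's own proof: both apply the canonical variation formula \eqref{def_gz}--\eqref{derivative} to the sub-fibration $K'/H\to K/H\to K/K'$ with the bi-invariant base metric $Q_{|\fk\cap\fk'^\perp}$, and both conclude from $\lim_{t\to 0}S(g_t)=S(h_F)$ that every value of $S$ on $\M^D_T(K'/H)$ is a limit of values on $\M^D_T(K/H)$. The only cosmetic difference is that you normalize $T_1^*=1$ up front and make the $\Ad_{K'}$-invariance check explicit, whereas the paper states the same family $h_t$ directly and cites the limit formula.
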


\begin{proof}
	Let $K'$ be the subgroup of $G$ with Lie algebra~$\fk'$. Thus $H\subset K'\subset K\subset G$. Given $h\in\M^D(K'/H)$ with $\tr_hT_{|\fk'\cap\fm}=1$, define a one-parameter family of metrics
	\begin{align*}
	h_t=\frac{1}{1-t\sum d_iT_i}\,h+\frac1t\,Q_{\fk'^\perp\cap\fk}\in \M^D(K/H),
	\end{align*}
	where the sum is taken over all $i$ with $\fm_i\subset\fk'^\perp\cap\fk$. Applying~\eqref{derivative} to the homogeneous fibration
	$$K'/H\to K/H\to K/K',$$
	we conclude that
	\begin{align*}
	\lim_{t\to 0}S(h_t)=S(h).
	\end{align*}
	This means that, for every $h\in\M^D(K'/H)$ with $\tr_hT_{|\fk'\cap\fm}=1$, there exists a metric $g\in\M^D(K/H)$ with $\tr_gT_{|\fk\cap\fm}=1$ and scalar curvature arbitrarily close to~$S(h)$.
\end{proof}

As we noted in Section~\ref{sec_submersions}, it is possible that the supremum in the definition of $\alpha_\fk^D$ is not attained by a metric in~$\Delta_\fk$. In this case, we have the following result.

\begin{prop}\label{prop_sup_achieved}
Assume that $\alpha_{\fk}^D$ is not attained. Then there exists an adjacent stratum $\Delta_{\fk'}$ such that $\alpha_{\fk'}^D=\alpha_\fk^D$ and $S(h)=\alpha_{\fk'}^D$ for some $h\in\M^D(K'/H)$ with $\tr_hT_{|\fk'\cap\fm}=1$. 
\end{prop}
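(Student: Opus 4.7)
My plan is to build a maximizing sequence for $\alpha_\fk^D$ and track its limit in the closed simplex associated to $K/H$, then use the propositions already established to pin down the limit stratum and iterate once if necessary.

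First I would choose $h_n\in\M^D(K/H)$ with $\tr_{h_n}T_{|\fk\cap\fm}=1$ and $S(h_n)\to \alpha_\fk^D$. Identifying each $h_n$ with a point $y^{(n)}$ in the simplex parametrizing $\M_T^D(K/H)$ (via the inverse-variable description of \sref{sec_complex}) and using compactness of its closure, extract a convergent subsequence $y^{(n)}\to y^*$. Since the supremum is assumed not to be attained, $y^*$ must lie on the boundary, so $y^*\in\Delta_{J'}$ for some nonempty proper subset $J'$ of the index set $J$ of $\fk\cap\fm$.

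Next I would identify $\Delta_{J'}$ as a subalgebra stratum. If $\fh\oplus\fm_{J'}$ were not a subalgebra of $\fk$, then there would exist indices $i,j\in J'$, $k\in J\setminus J'$ with $[ijk]>0$; the corresponding term $-\tfrac{[ijk]}{4}\,y_i^{(n)} y_j^{(n)}/y_k^{(n)}$ in \eqref{scalar_y} would force $S(h_n)\to -\infty$, contradicting convergence to the finite quantity $\alpha_\fk^D\ge 0$. Hence $\fk':=\fh\oplus\fm_{J'}$ is a subalgebra, and $\fh\subset\fk'\subsetneq\fk$. Applying \pref{growthalg} inside $K/H$ to this stratum gives $S(h_n)\le \alpha_{\fk'}^D+\eps_n C$ with $\eps_n=\max_{i\in J\setminus J'}y^{(n)}_i\to 0$, so $\alpha_\fk^D\le \alpha_{\fk'}^D$. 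Combined with $\alpha_{\fk'}^D\le\alpha_\fk^D$ from \pref{prop_alpha_less_alpha}, this yields $\alpha_{\fk'}^D=\alpha_\fk^D$.

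The step above produces an adjacent subalgebra $\fk'$ with the correct supremum value, but does not guarantee that the supremum is attained on $\M^D(K'/H)$. To fix this I would apply a minimality argument: among the finitely many intermediate subalgebras $\fh\subset\fk''\subsetneq\fk$ of the form $\fh\oplus\fm_{J''}$ satisfying $\alpha_{\fk''}^D=\alpha_\fk^D$ (a nonempty set by the previous paragraph), pick one of minimal dimension, and call it $\fk'$. If the defining supremum for $\alpha_{\fk'}^D$ were still not attained, the same reasoning with $\fk'$ in place of $\fk$ would produce an even smaller subalgebra $\fk''\subsetneq\fk'$ with $\alpha_{\fk''}^D=\alpha_\fk^D$, contradicting minimality. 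Hence the supremum is achieved by some $h\in\M^D(K'/H)$ with $\tr_h T_{|\fk'\cap\fm}=1$.

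The main obstacle is the first half of the second paragraph: one has to ensure that divergence toward an $\infty$-stratum forces $S\to -\infty$ not only along the radial paths $\gamma_v$ of \pref{growthinf} but for arbitrary sequences. This is in fact straightforward from \eqref{scalar_y}, since the single negative term involving the offending triple $[ijk]$ dominates whenever $y_k^{(n)}\to 0$ while $y_i^{(n)}y_j^{(n)}$ stays bounded away from zero, and all remaining terms stay bounded along the convergent subsequence.
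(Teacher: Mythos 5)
Your proof is correct and follows essentially the same strategy as the paper's: take a maximizing sequence, extract a convergent subsequence landing in a boundary stratum, rule out the possibility of a non-subalgebra stratum because the offending term in \eqref{scalar_y} would drive $S\to-\infty$ (the paper cites Proposition~\ref{growthinf} here, you argue directly from the formula — either way, same content), deduce $\alpha_{\fk'}^D=\alpha_\fk^D$ from Propositions~\ref{growthalg} and~\ref{prop_alpha_less_alpha}, and finally handle non-attainment at $\fk'$ by descending further. Your closing minimal-dimension contradiction is logically equivalent to the paper's explicit iteration, which terminates by Corollary~\ref{max_H}.
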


\begin{proof}
Since $\alpha_{\fk}^D$ is not attained, it is possible to find a sequence $h_i\in \M(K/H)$ with 
$
\lim_{i\to\infty}S(h_i)=\alpha_\fk^D
$
converging to some $h\in\Delta_{J'}$, where the stratum $\Delta_{J'}$ is adjacent to~$(\Delta_J,\fk)$. Applying Proposition~\ref{growthinf} to the homogeneous space $K/H$ and using the nonnegativity of~$\alpha_\fk^D$, we conclude that $\Delta_{J'}=\Delta_{\fk'}$ for some $\fk'\subset\fk$. Similarly, applying Proposition~\ref{growthalg} to $K/H$ shows that
	\begin{align*}
	\alpha_\fk^D=\lim_{i\to\infty}S(h_i)\le\alpha_{\fk'}^D.
	\end{align*}
	In light of Proposition~\ref{prop_alpha_less_alpha}, this means $\alpha_{\fk'}^D=\alpha_{\fk}^D$. If the supremum $\alpha_{\fk'}^D$ is attained, then we are done. Otherwise, we repeat the argument until we reach a subalgebra $\fk''$ for which $\alpha_{\fk''}^D$ is achieved. By \cref{max_H}, this will be the case at the latest for a subalgebra $\fk''$ in which $\fh$ is maximal. 
\end{proof} 

Finally, we show how the difference $\beta_\fk^D-\alpha_\fk^D$ controls the behavior of the scalar curvature functional.

\begin{prop}\label{above}
	Consider a subalgebra stratum $\Delta_\fk$ such that $\alpha_{\fk}^D$ is attained. If $\beta_\fk^D-\alpha_\fk^D>0$, then there exists a metric $g\in\Delta$, arbitrarily close to $\Delta_\fk$, with $S(g)>\alpha_\fk^D$.
\end{prop}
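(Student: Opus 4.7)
The plan is to produce the required metrics explicitly by applying the canonical variation formula~(\ref{scal_g}) to carefully chosen fiber and base metrics. The idea is that the constant term $S_F/T_1^*$ in~(\ref{scal_g}) can be made to equal $\alpha_\fk^D$ exactly, the linear coefficient $T_2^*(S_B/T_2^*-S_F/T_1^*)$ will then be strictly positive thanks to $\beta_\fk^D>\alpha_\fk^D$, and the quadratic O'Neill term will be negligible for small $t$.

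Concretely, since $\alpha_\fk^D$ is attained, first I would pick $g_F\in\M^D(K/H)$ with $\tr_{g_F}T_{|\fk\cap\fm}=1$ and $S(g_F)=\alpha_\fk^D$; in particular $T_1^*=1$ and $S_F/T_1^*=\alpha_\fk^D$. Next, since $\beta_\fk^D>\alpha_\fk^D$, the definition of $\beta_\fk^D$ produces a metric $g_B\in\M^D(G/K)$ with $\tr_{g_B}T_{|\fk^\perp}=1$ and $S(g_B)>\alpha_\fk^D$; in particular $T_2^*=1$ and $S_B/T_2^*>\alpha_\fk^D$. Being $G$-invariant on $G/K$, the metric $g_B$ automatically induces an $\Ad_K$-invariant inner product on $\fk^\perp$, so the projection $G/H\to G/K$ is a Riemannian submersion with fiber metric $g_F$ and base metric $g_B$, and formulas~(\ref{def_gz})--(\ref{scal_g}) apply to the canonical variation
\[
g_t=\frac{1}{1-t}\,g_F+\frac{1}{t}\,g_B\in\M_T^D,\qquad 0<t<1,
\]
whose scalar curvature simplifies, after substituting $T_1^*=T_2^*=1$, to
\[
S(g_t)=\alpha_\fk^D+\bigl(S(g_B)-\alpha_\fk^D\bigr)\,t-\frac{t^2}{1-t}\,|A|_g.
\]
The linear coefficient is strictly positive while the final term is $O(t^2)$, so $S(g_t)>\alpha_\fk^D$ for all sufficiently small $t>0$.

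Finally, in the $y$-parameterization of $\M_T^D$, the coordinate of $g_t$ on $\fm_i$ equals $(1-t)\,y_i^{g_F}$ for $i\in J$ and $t\,y_i^{g_B}$ for $i\in J^c$; as $t\to 0^+$ these approach the $y$-coordinates of $g_F$ on $\fm_J$ and zero on $\fm_{J^c}$ respectively, so $g_t$ converges to a point of $\Delta_\fk$. Hence the family $\{g_t\}$ provides metrics arbitrarily close to $\Delta_\fk$ with scalar curvature strictly greater than $\alpha_\fk^D$, as required. I do not expect a serious obstacle, since the argument is essentially a direct substitution into~(\ref{scal_g}). The hypothesis that $\alpha_\fk^D$ is attained is used precisely to match the constant term $S_F/T_1^*$ to $\alpha_\fk^D$ on the nose---without attainment one would need to compete the $O(t^2)$ O'Neill correction against a small gap in the constant term, which cannot be controlled uniformly in the choice of $g_F$; this accounts for why the case of non-attainment is handled separately via \pref{prop_sup_achieved}.
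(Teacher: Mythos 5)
Your proof is correct and takes essentially the same approach as the paper: pick $g_F$ realizing $\alpha_\fk^D$ and $g_B$ with $S(g_B)>\alpha_\fk^D$, form the canonical variation, and observe that the linear term dominates the quadratic O'Neill term for small~$t$. The only cosmetic difference is that the paper scales the starting metric to $g=2(\bar g_F+\bar g_B)$ so that $T_1^*=T_2^*=\tfrac12$ and $g\in\M_T^D$ on the nose, whereas you work directly with $T_1^*=T_2^*=1$ and check that each $g_t$ lands in $\M_T^D$ via the constraint $s+t=1$; both are fine since the submersion scalar-curvature formula is pointwise and does not require the base point $g_{1,1}$ to satisfy the trace constraint.
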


\begin{proof}
Choose $\bar g_F\in\M^D(K/H)$ such that $\tr_{\bar g_F}T_{|\fk\cap\fm}=1$ and $S(\bar g_F)=\alpha_\fk^D$. If $\beta_\fk^D-\alpha_\fk^D>0$, it is  possible to find $\bar g_B\in\M^D(G/K)$ with $\tr_{\bar g_B}T_{|\fk^\perp}=1$ and $S(\bar g_B)-\alpha_\fk^D>0$. Consider the metric $g\in\M_T^D(G/H)$ given by
	\begin{align*}
	g=2(\bar g_F+\bar g_B).
	\end{align*}
	If we let $g_t$ be the canonical variation as in~\eqref{def_gz}, we conclude from~\eqref{derivative} that 
	\begin{align*}
	\lim_{t\to0}S(g_t)=S(\bar g_F)=\alpha_\fk^D\qquad \text{and} \qquad
	\lim_{t\to0}\frac{dS(g_t)}{dt}=\tfrac12(S(\bar g_B)-\alpha_\fk^D)>0.
	\end{align*}
	Clearly, $S(g_t)>\alpha_\fk$ for small~$t$.
\end{proof} 

Combining Propositions~\ref{prop_sup_achieved} and~\ref{above} implies our main theorem if there exists only one decomposition $D$ (up to order of summands).

\begin{prop}\label{global_max_1}
Assume that $G/H$ is a compact homogeneous space such that the modules $\fm_i$ are inequivalent. Let $\fk$ be an intermediate subalgebra of the lowest possible dimension such that $\alpha_{\fk}^D=\sup_{\ell}\alpha_\ell^D$, where the supremum is taken over all intermediate subalgebras~$\ell$. If $\beta_\fk^D-\alpha_{\fk}^D>0$, then $S_{|\M_T}$ achieves its maximum at some metric~$g\in\M_T$, and hence $\Ric(g)=cT$ for some $c>0$.
\end{prop}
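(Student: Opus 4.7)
The plan is to exploit the hypothesis that the isotropy summands $\fm_i$ are pairwise inequivalent, so that the decomposition $D$ is unique up to reordering and $\M_T = \M_T^D$. Every subalgebra stratum then arises from a subalgebra of the form $\fh \oplus \fm_J$ for some $J \subset I$; since $I$ is finite, only finitely many such subalgebras exist, so $\sup_\ell \alpha_\ell^D$ is an honest maximum and an intermediate $\fk$ of lowest dimension realizing it is well defined. I will combine the compactness estimate \cref{cor_est_fixed_D} with the existence statement \pref{above}.

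First I would argue that, under the minimality of $\dim\fk$, the supremum $\alpha_\fk^D$ is actually attained by some metric on $K/H$. If it were not, \pref{prop_sup_achieved} would produce an adjacent stratum $\Delta_{\fk'}$ with $\fk' \subsetneq \fk$ and $\alpha_{\fk'}^D = \alpha_\fk^D$ attained; since inequivalence of the modules forces $\dim\fk' < \dim\fk$, this would contradict the choice of $\fk$. With $\alpha_\fk^D$ attained, \pref{above} together with the assumption $\beta_\fk^D - \alpha_\fk^D > 0$ produces a metric $g_0 \in \M_T$ with $S(g_0) > \alpha_\fk^D$.

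Next I would fix any $a$ with $\alpha_\fk^D < a < S(g_0)$. Because $\alpha_\fk^D = \sup_\ell \alpha_\ell^D$, every subalgebra stratum $\Delta_\ell$ satisfies $\alpha_\ell^D \le \alpha_\fk^D < a$, so \cref{cor_est_fixed_D} tells us that
\begin{equation*}
K_a = \{g \in \M_T^D \mid S(g) \ge a\}
\end{equation*}
is a compact subset of $\M_T^D$, and it is non-empty since $g_0 \in K_a$. Continuity of $S$ on $\M_T^D$ then yields a maximizer $g \in K_a$; any $h \in \M_T \setminus K_a$ satisfies $S(h) < a \le S(g_0) \le S(g)$, so $g$ realizes the global supremum of $S_{|\M_T}$. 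By \pref{critical} we conclude $\Ric(g) = cT$ for some constant $c$, and $c > 0$ follows because $T$ is positive-definite while $G/H$ is not a torus (so no invariant metric can have $\Ric \le 0$).

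I expect the only delicate point to be the first step: the possible failure of $\alpha_\fk^D$ to be attained is a genuine phenomenon (the simplex $\Delta_\fk$ is not closed and $S$ need not extend continuously to its boundary), and the minimality-of-dimension hypothesis is precisely the feature that allows \pref{prop_sup_achieved} to be bootstrapped into attainment. Everything after that is a clean application of compactness and continuity, followed by the variational characterization of critical points.
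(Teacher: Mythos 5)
Your proof is correct and follows exactly the route the paper itself sketches (the paper gives no explicit proof of this proposition, just the remark that it follows by combining \pref{prop_sup_achieved} and \pref{above}): you use the minimality of $\dim\fk$ together with \pref{prop_sup_achieved} to show $\alpha_\fk^D$ is attained, then \pref{above} produces a metric with scalar curvature strictly above $\alpha_\fk^D$, and \cref{cor_est_fixed_D} supplies the compactness needed to extract a maximizer. One minor stylistic point: the inequality $\dim\fk'<\dim\fk$ is automatic from $\fk'\subsetneq\fk$ as vector spaces; the inequivalence of the modules is used earlier, to guarantee $\M_T=\M_T^D$ and that there are only finitely many intermediate subalgebras, not to deduce this dimension drop.
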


It is natural to add an additional marking to the strata of $\partial\Delta$ by labeling a subalgebra stratum $(\Delta_\fk,\alpha_\fk,\beta_\fk)$, which encodes the behavior of $S$ in a neighborhood of $\Delta_\fk$.

As remarked before, if $\fk$ is an intermediate subalgebra, then the connected subgroup $K$ with Lie algebra $\fk$ may not be compact. Thus, $G/K$ is not necessarily a manifold, and hence \eqref{fibs} is not an actual Riemannian submersion. Nevertheless, \eqref{scal_g} still holds since, by homogeneity, this is a local formula and \eqref{fibs} is still a Riemannian submersion locally. Thus, $\beta_{\fk}^D$ is well-defined for any intermediate subalgebra.

\bigskip

\section{Global maxima}\label{sec_Maxima}

\smallskip

From now on, we allow the decomposition $D$ to vary. Clearly, the numbers $b_i$ and the structure constants $[ijk]$ depend continuously on~$D$. Recall also that the space $\mathcal D$ of all decompositions is compact.

Consider an intermediate subgroup $K$ with Lie algebra~$\fk$. The numbers $\alpha_\fk^D$ and $\beta_\fk^D$ introduced above depend on the choice of the decomposition~$D$. Removing this dependence, we define
\begin{align*}
\alpha_\fk&=\sup \{S(g)\mid g\in \M(K/H)\text{ with }\tr_g(T_{|\fk\cap\fm})=1\}\qquad\mbox{and} \\
\beta_\fk&=\sup \{S(g)\mid g\in \M(G/K)\text{ with }\tr_g(T_{|\fk^\perp})=1\},
\end{align*}
and introduce an invariant for $G/H$ given by
\begin{equation*}\label{alpha_GH}
\alpha_{G/H}=\textstyle\sup_{\fk}\alpha_\fk,
\end{equation*}
where the supremum is taken over all intermediate subalgebras~$\fk$.

Our first goal is to extend Propositions~\ref{growthinf} and~\ref{growthalg} to all of~$\M_T$. We will  use the following parametrization of the space~$\M$, convenient in our context. Specifically, consider the map ${\sigma:\mathbb R_+^r\times\DD\to\M}$ defined by
$$
\sigma(y,D)=\sum_i\tfrac1{y_i}Q_{|\mathfrak m_i},
$$
where $y=(y_1,\ldots,y_r)$ and $D$ is the decomposition with irreducible modules $\mathfrak m_1,\ldots,\mathfrak m_r$. This map is clearly continuous. While it is surjective, it may not be injective. The preimages of some metrics are infinite when some of the isotropy summands of $G/H$ are equivalent. However, given a metric $g\in\M$, the preimage $\sigma^{-1}(g)$ is compact.

To state our next result, we fix a point $(y,D)$ in the boundary of~$\sigma^{-1}(\M_T)$. Let $\Delta$ be the simplex associated with the decomposition~$D$. Clearly, $y$ lies in a stratum $\Delta_J$ for some $J\subset I$. The following result generalises Proposition~\ref{growthalg} to all of~$\M_T$.

\begin{prop}\label{local_control_3}
Assume that $y$ lies in a subalgebra stratum $(\Delta_J,\fk)$. Then for every $\e>0$ there exists an open neighborhood $U$ of $(y,D)$ in $\R^r\times\DD$ such that
\begin{align*}
S(\sigma(y',D'))\le\alpha_\fk+\epsilon
\end{align*}
whenever $(y',D')\in U\cap \sigma^{-1}(\M_T)$.
\end{prop}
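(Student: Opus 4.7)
The plan is to adapt the argument of \pref{growthalg} to allow the decomposition $D'$ to vary near $D$, and then to finish with a continuity argument at the limit point. Starting from the formula
\begin{equation*}
S(\sigma(y',D'))=\tfrac12\sum_{i}d_ib_i^{D'}y'_i-\tfrac14\sum_{i,j,k}[ijk]^{D'}\tfrac{y'_iy'_j}{y'_k},
\end{equation*}
I would split the cubic sum according to how many of the three indices lie in $J^c$ (zero, one, two, or three). Since $[ijk]^{D'}\ge 0$ and $y'_i>0$, every summand $-\tfrac14[ijk]^{D'}\tfrac{y'_iy'_j}{y'_k}$ is non-positive. Hence the contributions with exactly one index in $J^c$ (which vanish at $D$ because $\fk=\fh\oplus\fm_J^D$ is a subalgebra, but need not vanish for nearby $D'$) and with all three indices in $J^c$ can simply be discarded to produce an upper bound valid throughout a neighborhood of $(y,D)$. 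For the terms with exactly two indices in $J^c$, the AM--GM step from the proof of \pref{growthalg} (using $\tfrac{y'_p}{y'_q}+\tfrac{y'_q}{y'_p}\ge 2$) goes through verbatim with $[ijk]^{D'}$ in place of $[ijk]$, bounding them above by $-\tfrac12\sum_{a\in J,\,p,q\in J^c}[apq]^{D'}y'_a$.

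Introducing $\bar b_a^{D'}:=b_a^{D'}-\sum_{p,q\in J^c}[apq]^{D'}/d_a$ and absorbing the AM--GM contribution into the linear term then yields
\begin{equation*}
S(\sigma(y',D'))\le \Phi(y',D')+\tfrac12\sum_{p\in J^c}d_pb_p^{D'}y'_p,
\end{equation*}
where $\Phi(y',D'):=\tfrac12\sum_{a\in J}d_a\bar b_a^{D'}y'_a-\tfrac14\sum_{a,b,c\in J}[abc]^{D'}\tfrac{y'_ay'_b}{y'_c}$. The function $\Phi$ is continuous at $(y,D)$: it involves $y'_c$ only for $c\in J$, where $y_c>0$, while $b_a^{D'}$, $[apq]^{D'}$, and $[abc]^{D'}$ all depend continuously on $D'\in\DD$. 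At the limit point $(y,D)$ the modules $\fm_a^D$ with $a\in J$ lie inside $\fk\cap\fm$, the bracket of $\fk$ coincides with that of $\fg$ on them, and $\bar b_a^D$ is precisely the Killing form coefficient of $K$ on $\fm_a^D$. Consequently $\Phi(y,D)=S(h)$, where $h:=\sum_{a\in J}\tfrac{1}{y_a}Q_{|\fm_a^D}$ is regarded as a metric on $K/H$. The constraint $\sum_id_iT_i^Dy_i=1$ reduces at $y$ to $\tr_hT_{|\fk\cap\fm}=1$ since $y_p=0$ for $p\in J^c$, whence $S(h)\le\alpha_\fk$ by definition.

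Given $\epsilon>0$, continuity of $\Phi$ allows me to shrink $U$ so that $\Phi(y',D')\le\alpha_\fk+\epsilon/2$, and smallness of $y'_p$ for $p\in J^c$ ensures $\tfrac12\sum_{p\in J^c}d_pb_p^{D'}y'_p\le\epsilon/2$. Combining these with the bound above gives $S(\sigma(y',D'))\le\alpha_\fk+\epsilon$, as required.

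The main obstacle---and the reason the extension to varying $D'$ is not immediate---is that the identity $d_ab_a^D=d_a\bar b_a^D+\sum_{p,q\in J^c}[apq]^D$ has intrinsic geometric meaning (relating the Killing forms of $G$ and $K$) only when $D$ respects the subalgebra split $\fm=(\fk\cap\fm)\oplus\fk^\perp$; at a general $D'$ the summands $\fm_a^{D'}$ with $a\in J$ do not lie inside $\fk\cap\fm$, and there is no natural ``restriction to $K/H$''. My resolution is to sidestep this by \emph{defining} $\bar b_a^{D'}$ and the auxiliary expression $\Phi(y',D')$ by the same algebraic formula for all $D'$, relying on the manifest continuity of $\Phi$, and invoking the geometric interpretation only at the limit point $D$, where it delivers the bound by $\alpha_\fk$.
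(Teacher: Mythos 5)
Your proof is correct and takes essentially the same route as the paper: you split the cubic sum by how many indices lie in $J^c$, discard the manifestly non-positive pieces, apply the same AM--GM estimate to the mixed terms, and recognize the surviving $J$-only part as a scalar curvature on $K/H$ bounded by $\alpha_\fk$. The only difference is presentational---you package the error control as continuity of the auxiliary function $\Phi(y',D')$ at $(y,D)$, whereas the paper re-expands the same expression around the base decomposition $D$ (writing $b_i'$, $[ijk]'$ as $b_i$, $[ijk]$ plus differences) and bounds the resulting error terms explicitly using the trace constraint.
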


\begin{proof}
We use the notation $b_i$, $T_i$ and $[ijk]$ (respectively, $b_i'$, $T_i'$ and $[ijk]'$) for the constants associated with the decomposition~$D$ (respectively,~$D'$). Given $\delta>0$, there exists a neighborhood $U_\delta$ of $(y,D)$ in $\R^r\times\DD$ such that
\begin{align*}
|y_i-y_i'|+|b_i-b_i'|+|T_i-T_i'|+|[ijk]-[ijk]'|<\delta
\end{align*}
for all~$i,j,k$ whenever $(y',D')\in U_\delta\cap\sigma^{-1}(\M_T)$. Let us choose $\delta$ small enough to ensure that $y_i'>\frac{y_i}{2}$ in this set. The trace constraint implies that  $y_i'<\frac{1}{d_iT_i'}$. Notice also that there exist common lower and upper bounds for $T_i$ independent of~$D$.

If $(y',D')\in U_\delta\cap \sigma^{-1}(\M_T)$, we find, as in the proof of \pref{growthalg}, that
\begin{align*}
S(\sigma(y',D'))&=\frac12\sum_{i}d_ib_i'y_i'-\frac14\sum_{i,j,k}\frac{y_i'y_j'}{y_k'}[ijk]'
\\
&\le
\frac12\sum_{i\in J}\bigg(d_ib_i'-\sum_{j,k\in J^c}[ijk]'\bigg)y_i'-\frac14\sum_{i,j,k\in J}\frac{y_i'y_j'}{y_k'}[ijk]'+\frac12\sum_{i\in J^c}d_ib_i'y_i'
\\
&\le
\frac12\sum_{i\in J}d_i\bar b_iy_i'-\frac14\sum_{i,j,k\in J}\frac{y_i'y_j'}{y_k'}[ijk]+\frac12\sum_{i\in J}d_i(b_i'-b_i)y_i'
\\
&\hphantom{=}~-\frac12\sum_{i\in J}\sum_{j,k\in J^c}([ijk]'-[ijk])y_i'-\frac14\sum_{i,j,k\in J}\frac{y_i'y_j'}{y_k'}([ijk]'-[ijk])+\frac12\sum_{i\in J^c}d_ib_i'y_i'.
\end{align*}
Consequently, for small enough $\delta$, since $y_i=0$ when $i\in J^c$, we have
\begin{align*}
S(\sigma(y',D'))&\le
S(\sigma(y',D)_{|\fk\cap\fm})+\frac12\sum_{i\in J}\frac{|b_i'-b_i|}{T_i'}
\\
&\hphantom{=}~+\frac12\sum_{i\in J}\sum_{j,k\in J^c}\frac{|[ijk]'-[ijk]|}{d_iT_i'}+\frac12\sum_{i,j,k\in J}\frac{|[ijk]'-[ijk]|}{d_id_jT_i'T_j'y_k}+\frac12\sum_{i\in J^c}d_ib_i'|y_i'-y_i|
\\
&\le S(\sigma(y',D)_{|\fk\cap\fm})+\frac\epsilon2.
\end{align*}
Shrinking $\delta$ further if necessary and using the continuity of the scalar curvature, we conclude that 
$$
S(\sigma(y',D)_{|\fk\cap\fm})<S(\sigma(y,D))+\frac{\e}2\le\alpha_\fk+\frac{\e}2,
$$
which implies the result.
\end{proof}

Using a similar (but simpler) proof, we can generalize Proposition~\ref{growthinf}.

\begin{prop}\label{alg_control_3}
Assume that $(y,D)$ lies in a stratum $(\Delta_J,\infty)$. Given $a>0$, there exists a neighborhood $U$ of $(y,D)$ in $\R^r\times\DD$ such that $S(\sigma(y',D'))<-a$ whenever $(y',D')\in U\cap \sigma^{-1}(\M_T)$.
\end{prop}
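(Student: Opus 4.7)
The plan is to adapt the pointwise blow-up argument of \pref{growthinf} while absorbing the extra freedom in $D'$ via continuity of the structure data on $\DD$. Since $(y,D) \in (\Delta_J, \infty)$, the subspace $\fh \oplus \fm_J$ fails to be a subalgebra of $\fg$, so there exist indices $i, j \in J$ and $k \in J^c$ with $[ijk] =: c_0 > 0$ relative to~$D$. At the base point we have $y_i, y_j > 0$ and $y_k = 0$.

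First I would choose a neighborhood $U_0$ of $(y, D)$ in $\R^r \times \DD$ small enough that for every $(y', D') \in U_0$ the structure data are close to that at $(y, D)$: specifically $[ijk]' \ge c_0/2$, $y_i' \ge y_i/2$, $y_j' \ge y_j/2$, and the coefficients $b_\ell'$, $T_\ell'$ all stay within a fixed compact range, uniformly in $\ell$. The needed continuity follows, as noted at the start of \sref{sec_Maxima}, from the description of $\DD$ as a subspace of a product of Grassmannians, on which the quantities $[\ell m n]$, $b_\ell$ and $T_\ell$ are continuous functions.

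Next, on $U_0 \cap \sigma^{-1}(\M_T)$ the trace constraint $\sum_\ell d_\ell T_\ell' y_\ell' = 1$ together with positivity of each summand yields $y_\ell' \le 1/(d_\ell T_\ell')$, which is uniformly bounded on $U_0$. Consequently the positive part $\tfrac12 \sum_\ell d_\ell b_\ell' y_\ell'$ of $S(\sigma(y', D'))$ is bounded above by a constant $A > 0$ depending only on~$U_0$. Since every term $[\ell m n]' \, y_\ell' y_m' / y_n'$ in the negative part of~\eqref{scalar_y} is non-negative, dropping all but the $(i,j,k)$-term gives
\begin{equation*}
S(\sigma(y', D')) \;\le\; A - \tfrac14 [ijk]' \, \frac{y_i' y_j'}{y_k'} \;\le\; A - \frac{c_0 \, y_i y_j}{32 \, y_k'}
\end{equation*}
throughout~$U_0 \cap \sigma^{-1}(\M_T)$.

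Finally, shrinking $U_0$ to a smaller open set $U$ on which $y_k' < \delta$ for any $\delta$ with $\delta < \tfrac{c_0 y_i y_j}{32(A + a)}$ forces $S(\sigma(y', D')) < -a$ for every $(y', D') \in U \cap \sigma^{-1}(\M_T)$, which is what is required. The only non-routine point is the continuity of $[ijk]$ and $b_\ell$ as functions on~$\DD$; once this is granted, the argument reduces to the one used in \pref{growthinf}, with the mild additional bookkeeping needed to allow $D'$ to move in a neighborhood of~$D$.
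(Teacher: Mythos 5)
Your proof is correct and follows exactly the route the paper has in mind: the paper dispatches this proposition with the remark that a ``similar (but simpler) proof'' than that of Proposition~\ref{local_control_3} works, and what you have written is precisely that simplification --- the blow-up argument of Proposition~\ref{growthinf} (bound the positive part of~\eqref{scalar_y} uniformly via the trace constraint, keep one negative term $-\tfrac14[ijk]'y_i'y_j'/y_k'$ with $y_k'\to0$), made uniform over a neighborhood in $\DD$ by the continuity of $[ijk]$, $b_\ell$ and $T_\ell$ recorded at the start of Section~\ref{sec_Maxima}. The arithmetic ($[ijk]'\ge c_0/2$, $y_i'\ge y_i/2$, $y_j'\ge y_j/2$ giving the factor $32$, then $\delta<c_0y_iy_j/(32(A+a))$) checks out, and the order of choices --- fix $U_0$, extract $A$, then shrink to $U$ so that $y_k'<\delta$ --- is consistent.
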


The precompactness of $\sigma^{-1}(\M_T)\subset\R^r\times\DD$ and Propositions~\ref{local_control_3} and~\ref{alg_control_3} yield the following extension of Corollary~\ref{cor_est_fixed_D} to all of~$\M_T$.

\begin{cor}\label{cor_equiv_est}
If $a>\alpha_{G/H}$, then $\{g\in\M_T \mid S(g)\ge a\}$ is a compact subset of~$\M_T$.
\end{cor}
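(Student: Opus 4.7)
The plan is to work upstairs in the preimage of $\M_T$ under the parametrization $\sigma\colon\R_+^r\times\DD\to\M$ from \sref{sec_Maxima}, since $\sigma$ converts boundary behavior of $\M_T$ into a concrete stratification of a precompact set. First I would observe that $\sigma^{-1}(\M_T)$ has compact closure in $[0,\infty)^r\times\DD$: the space $\DD$ of decompositions is compact (noted in \sref{prelim}), and the trace constraint $\sum_i d_iT_iy_i=1$ combined with the uniform positive lower bound on the $T_i$ independent of $D$ (recalled in the proof of \pref{local_control_3}) confines $y$ to a bounded subset of $\R_+^r$.

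Next I would argue by sequential compactness. Take any sequence $g_n$ in $\{g\in\M_T\mid S(g)\ge a\}$ and choose preimages $(y_n,D_n)\in\sigma^{-1}(\M_T)$; after passing to a subsequence, $(y_n,D_n)\to(y_\infty,D_\infty)$ in $[0,\infty)^r\times\DD$. The limit lies in some stratum of the simplex $\Delta^{D_\infty}$ determined by the vanishing coordinates of $y_\infty$. If this stratum is of infinity type, then \pref{alg_control_3}, applied with threshold $|a|+1$, furnishes a neighborhood of $(y_\infty,D_\infty)$ on which $S\circ\sigma<-(|a|+1)$, contradicting $S(g_n)\ge a$ for large~$n$. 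If instead the stratum is a subalgebra stratum labeled by some intermediate $\fk$, then \pref{local_control_3} applied with $\eps=(a-\alpha_{G/H})/2>0$ furnishes a neighborhood on which $S\circ\sigma\le\alpha_\fk+\eps\le\alpha_{G/H}+\eps<a$, again a contradiction for large~$n$. Hence every coordinate of $y_\infty$ is strictly positive, so $(y_\infty,D_\infty)\in\sigma^{-1}(\M_T)$; continuity of $\sigma$ gives $g_n\to\sigma(y_\infty,D_\infty)\in\M_T$ along the subsequence, and continuity of $S$ places this limit back in the set.

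Combined with the fact that $\{g\in\M_T\mid S(g)\ge a\}$ is closed in $\M_T$ as the preimage of $[a,\infty)$ under the continuous function $S$, this yields compactness. The only subtlety is that $\sigma$ need not be injective when some isotropy summands of $G/H$ are equivalent, but this is harmless: the whole argument takes place upstairs in $\R_+^r\times\DD$ and only the existence, not uniqueness, of a convergent subsequence of preimages is needed. The genuine analytic content has already been absorbed into \pref{local_control_3} and \pref{alg_control_3}, so there is no further obstacle at this stage; the corollary is essentially a packaging of those two estimates via a compactness argument.
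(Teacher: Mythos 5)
Your proof is correct and follows the same route the paper intends: the paper derives Corollary~\ref{cor_equiv_est} directly from the precompactness of $\sigma^{-1}(\M_T)$ in $\R^r\times\DD$ together with Propositions~\ref{local_control_3} and~\ref{alg_control_3}, and your write-up simply unpacks that one-line deduction into an explicit sequential-compactness argument upstairs in $\R^r\times\DD$, correctly handling both stratum types and the non-injectivity of~$\sigma$.
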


Our next result generalises Proposition~\ref{prop_sup_achieved}.

\begin{prop}\label{global_achieved}
Assume that $\fh$ is not maximal in $\fg$. Then there exists an intermediate subgroup $K$ with Lie algebra $\fk$  such that  $\alpha_{\fk}  =\alpha_{G/H}$. If $K$ has the least possible dimension of all such subgroups, then there exists $h\in\M(K/H)$ with
\begin{align*}
S(h)=\alpha_{\fk}\qquad\text{and}\qquad \tr_hT_{|\fk\cap\fm}=1.
\end{align*}
\end{prop}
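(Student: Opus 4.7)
The strategy is to mimic Proposition~\ref{prop_sup_achieved} at the level of the full space $\M_T(G/H)$, using the boundary estimates of Propositions~\ref{local_control_3} and~\ref{alg_control_3} together with the compactness of $\DD$. First I would realise $\alpha_{G/H}$ as a limit inside $\M_T(G/H)$: pick a sequence of intermediate subalgebras $\fk_n$ with $\alpha_{\fk_n}\to\alpha_{G/H}$ and metrics $h_n\in\M(K_n/H)$ satisfying $\tr_{h_n}T_{|\fk_n\cap\fm}=1$ and $S(h_n)>\alpha_{G/H}-1/n$. For each $n$, choose any $\Ad_{K_n}$-invariant metric $\bar g_{B,n}$ on $G/K_n$ and form the canonical variation $g_t^{(n)}$ from~\eqref{def_gz} along the fibration $K_n/H\to G/H\to G/K_n$. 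By~\eqref{derivative}, $\lim_{t\to 0}S(g_t^{(n)})=S(h_n)$, so choosing $t_n<1/n$ small enough yields a metric $g_n:=g_{t_n}^{(n)}\in\M_T(G/H)$ with $S(g_n)>\alpha_{G/H}-2/n$ and such that $g_n$ diverges in $\M_T$ (since the base component $\tfrac{1}{t_n}\bar g_{B,n}$ blows up).

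Next I would extract a boundary limit: choose preimages $(y_n,D_n)\in\sigma^{-1}(g_n)$, which lie in a compact subset of $\R^r\times\DD$ by the trace constraint (giving a uniform upper bound on the $y_{i,n}$) and the compactness of $\DD$, and pass to a convergent subsequence $(y_n,D_n)\to(y_\infty,D_\infty)$. The divergence of $g_n$ forces $y_\infty$ into a boundary stratum $\Delta_J$ of $\Delta^{D_\infty}$ with $J\subsetneq I$; the lower bound $S(g_n)\to\alpha_{G/H}\ge 0$ combined with Proposition~\ref{alg_control_3} rules out the $\Delta_\infty$-strata, so $\fk_\infty:=\fh\oplus\fm_J^{D_\infty}$ is a subalgebra, and the trace constraint forces $J\ne\emptyset$, making $\fk_\infty$ a proper intermediate subalgebra. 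Proposition~\ref{local_control_3} then gives $\limsup_{n\to\infty}S(g_n)\le\alpha_{\fk_\infty}$, and since $\alpha_{\fk_\infty}\le\alpha_{G/H}$ by definition, the first assertion follows.

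For the second assertion, fix an intermediate subalgebra $\fk$ of least dimension with $\alpha_\fk=\alpha_{G/H}$ and suppose for contradiction that the supremum defining $\alpha_\fk$ is not attained in $\M(K/H)$. If $\fh$ is maximal in $\fk$, then Corollary~\ref{max_H} applied to $K/H$ (with $T_{|\fk\cap\fm}$ in place of $T$) says the supremum \emph{is} attained — contradiction. Otherwise, running the argument of the previous two paragraphs with $G$ replaced by $K$ produces a proper intermediate subalgebra $\fk'$ with $\fh\subsetneq\fk'\subsetneq\fk$ and $\alpha_{\fk'}=\alpha_\fk$, which is then an intermediate subalgebra of $\fg$ with $\alpha_{\fk'}=\alpha_{G/H}$ and $\dim\fk'<\dim\fk$, contradicting the minimality of $\dim\fk$. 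The main technical difficulty is Step~2: one must simultaneously arrange $g_n$ to diverge while keeping $S(g_n)$ bounded, and then exploit the dichotomy between subalgebra strata and $\Delta_\infty$-strata provided by Propositions~\ref{local_control_3}--\ref{alg_control_3} to identify the limit subalgebra cleanly.
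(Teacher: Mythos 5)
Your proof is correct in spirit but follows a genuinely different route from the paper. The paper never lifts anything to $G/H$: it chooses, for each $i$, a subgroup $K_i$ of \emph{least possible dimension} satisfying $\alpha_{\fk_i}\ge\alpha_{G/H}-\frac1{2i}$, passes to a limit $\fk_i\to\fk$ with $\dim\fk_i=\dim\fk$, and shows that a maximising sequence of metrics $h_i\in\M(K_i/H)$ stays in a compact set because the minimality forces $\alpha_{G/H}>\sup_\ell\alpha_\ell$ over all $\fh\subsetneq\ell\subsetneq\fk$. Both assertions then drop out of a single compactness argument (adapting Propositions~\ref{local_control_3}--\ref{alg_control_3} and Corollary~\ref{cor_equiv_est} to the varying family $K_i/H$). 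Your version instead first lifts the $h_n$ to $\M_T(G/H)$ via canonical variations and applies the boundary dichotomy there; this works, but the detour is not needed, and it requires a little care (the fibre metric must be taken as $\frac1{T_1^*}h_n$, and the base metrics $\bar g_{B,n}$ must be chosen in a bounded family so that the $g_n$ genuinely escape to the boundary).

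There is also a small imprecision in your second step. Literally ``running the argument with $G$ replaced by $K$'' would deliver a subalgebra $\fk'$ with $\alpha_{\fk'}=\alpha_{K/H}$, not with $\alpha_{\fk'}=\alpha_\fk$, and equality of $\alpha_{K/H}$ with $\alpha_\fk$ is not something you have a priori. What actually closes the argument is to work directly with a maximising sequence $h_n\in\M(K/H)$ for $\alpha_\fk$: if the supremum is not attained this sequence must leave every compact set, Propositions~\ref{local_control_3} and~\ref{alg_control_3} (applied to $K/H$) force its boundary limit into a subalgebra stratum $\Delta_{\fk'}$ with $\alpha_\fk\le\alpha_{\fk'}$, and then $\alpha_{\fk'}\le\alpha_{G/H}=\alpha_\fk$ yields the desired $\fk'$ of strictly smaller dimension. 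With that substitution your proof goes through.
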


\begin{proof}
For each $i\in\N$, choose a subgroup $K_i\subset G$ with Lie algebra $\fk_i$ such that
$\alpha_{\fk_i}\ge\alpha_{G/H}-\frac1{2i}$. Assume that $K_i$ has  the least possible dimension among all  subgroups with this property.
Let $h_i$ be a metric on $K_i/H$ with
$$
S(h_i)>\alpha_{G/H}-\tfrac1i \qquad\text{and}\qquad \tr_{h_i}T_{|\fk_i\cap\fm}=1.
$$ 
We may assume that $\fk_i$ converge to an intermediate subalgebra $\fk$ and that $\dim\fk_i=\dim\fk$ for all $i$.  Our goal is to show that a subsequence of $(h_i)$ converges to a metric $h\in\M(K/H)$.

Consider a decomposition $D_i\in\DD$ with modules $\mathfrak m_1^i,\ldots,\mathfrak m_r^i$ such that 
$$
\fk_i=\fh\oplus \bigoplus_{j\in J_i}\mathfrak m_j^i\qquad\text{and}\qquad h_i(\mathfrak m_k^i,\mathfrak m_l^i)=0
$$
for some $J_i\subset I$ and all $k,l\in J_i$ with $k\ne l$. Passing to a subsequence if necessary, we may assume that these decompositions converge to some $D\in\DD$ with modules $\mathfrak m_1,\ldots,\mathfrak m_r$ and that $J_i$ does not depend on~$i$ (thus we can omit the index~$i$ from the notation~$J_i$). Let $K$ be the connected subgroup of $G$ with Lie algebra $\fk=\fh\oplus\mathfrak m_J$.  There exist positive numbers~$x_{ji}$ such that
\begin{align*}
h_i=\sum_{j\in J}x_{ji}Q_{|\mathfrak m_j^i}.
\end{align*}
Note that $\alpha_{G/H}>\sup_{\ell}\alpha_\ell$, where the supremum is taken over all intermediate subalgebras $\ell$ between $\fh$ and~$\fk$. Indeed, if not, there exists a subalgebra $\ell$ with $\alpha_\ell > \alpha_{G/H} - \frac{1}{2i}$, contradicting the assumption that $K_i$ is chosen to be of smallest possible dimension. Therefore,
$$
S(h_i)>\alpha_{G/H} - \tfrac{1}{i}>\sup\nolimits_{\ell}\alpha_\ell
$$
for large~$i$. We now claim that the constants $x_{ji}$ all lie in some compact subset of~$\R_+$. To see this, we can argue as in 
Propositions~\ref{local_control_3} and \ref{alg_control_3} and \cref{cor_equiv_est} replacing $G/H$ with the sequence $(K_i/H)$ and using the fact that the structure constants of $\fk_i$ converge to those of~$\fk$.

Thus, passing to a subsequence  if necessary, we may assume that
$$
\lim_{i\to\infty}x_{ji}=x_i\in\R_+
$$
for $j\in J$. The metric $h=\sum_{i\in J}x_iQ_{|\mathfrak m_i}$ satisfies  $S(h)=\alpha_{\fk}=\alpha_{G/H}$ and $\tr_hT_{|\fk\cap\fm}=1$.
\end{proof}

We are now ready to prove our main theorem.

\begin{proof}[Proof of the main theorem] The existence of the subgroup $K$ follows from \pref{global_achieved}.
By~\pref{above}, there exists $\e>0$ such that the superlevel set $\{g\in\M_T\mid S(g)\ge \alpha_{\fk}+\e\}$ is nonempty. Corollary~\ref{cor_equiv_est} implies that this set is also compact. Consequently, $S_{|\M_T}$ assumes its maximum at a metric~$g\in\M_T$. As a critical point, such a metric satisfies $\Ric(g)=cT$ for some constant $c>0$.
\end{proof}

\bigskip

\section{Examples}\label{sec_Example}

\smallskip

\subsection{The Stiefel manifold $\mathbf{\emph{V}_2(\R^4)}$.}

This example is interesting since some of the modules $\fm_i$ are equivalent and hence we need to consider non-diagonal metrics as well. Furthermore, the set of decompositions is not discrete; in fact, as we will see below, $\DD$ is two-dimensional. In addition, there exists a circle of intermediate subgroups and hence we need to maximize~$\alpha_\fk$. This is  the first example of this type where the existence of global maxima was studied.

Consider the homogeneous space $G/H=(SU(2)\times SU(2))/S^1$ where the circle group is embedded diagonally into $SU(2)\times SU(2)$. The bi-invariant metric $Q$ we choose on $G$ is such that 
$
|(X,Y)|_Q=-\tfrac12(\tr(X^2)+\tr(Y^2))
$. Since the two-fold cover $SU(2)\times SU(2)\to SO(4)$ sends the diagonal embedding $\diag(SU(2))\subset SU(2)\times SU(2)$ to $SO(3)\subset SO(4)$, the space $G/H$ coincides with the Stiefel manifold $V_2(\mathbb R^4)=SO(4)/SO(2)$. 

We can identify $SU(2)$ and $\fsu(2)$ with the group of unit quaternions and the Lie algebra of purely imaginary quaternions. Then
$$
H=\{(e^{\eta\bf i},e^{\eta\bf i})\,|\,\eta\in[0,2\pi)\}\qquad\mbox{and}\qquad 
\fh=\spam\{(\,\mathbf{i},\,\mathbf{i})\}.
$$
Consider the following $\Ad_H$-invariant decomposition of $\fm$:
\begin{align}\label{isolated_decom_Stiefel}
	\fm_0=\spam\{(\,\mathbf{i},-\mathbf{i})\},\qquad \fm_1=\{(z\,\bj,0)\mid z\in\C \}\qquad\mbox{and}\qquad\fm_2=\{(0,z\,\bj)\mid z\in\C\}.
\end{align}
Under $\Ad_H$, the element $(e^{\eta\bf i},e^{\eta\bf i})\in H$ takes $(z{\bf j},0)$ and $(0,z\bf j)$ to $(e^{2\eta\bf i}z{\bf j},0)$ and $(0,e^{2\eta\bf i}z\bf j)$, respectively. This implies that the restrictions of $\Ad_H$ to $\fm_1$ and $\fm_2$ are equivalent complex representations. 
Using the $Q$-orthonormal bases $\big\{\frac{({\bf i},-{\bf i})}{\sqrt2}\big\}$ of~$\fm_0$,  $\{({\bf j},0), ({\bf k},0)\}$ of~$\fm_1$, and $\{(0,{\bf j}), (0,{\bf k})\}$ of~$\fm_2$, we can thus represent the metric $g\in\M$ and the tensor field $T$ by the matrices
\begin{equation}\label{Stiefel_metric}
	g=\left(\begin{matrix}
		x_0&0&0&0&0\\
		0&x_1&0&x_3&x_4\\
		0&0&x_1&-x_4&x_3\\
		0&x_3&-x_4&x_2&0\\
		0&x_4&x_3&0&x_2
	\end{matrix}
	\right)\qquad\mbox{and}\qquad
	T=
	\left(
	\begin{matrix}
		T_0&0&0&0&0\\
		0&T_1&0&T_3&T_4\\
		0&0&T_1&-T_4&T_3\\
		0&T_3&-T_4&T_2&0\\
		0&T_4&T_3&0&T_2
	\end{matrix}
	\right),
\end{equation}
which must be positive-definite. This is the case if $x_0,x_1,x_2>0$ and $x_1x_2-x_3^2-x_4^2>0$ and similarly for~$T$. The outer automorphism of $G=SU(2)\times SU(2)$ that switches the two factors preserves $H$ and hence induces an isometry of $(V_2(\R^4),Q)$. It acts on $g$ by taking $(x_1,x_2,x_3,x_4)$ to $(x_2,x_1,x_3,-x_4)$, and similarly on~$T$. Conjugation by $(\bf{j},\bf{j})$ preserves $H$ as well. It acts on $g$ by taking $(x_1,x_2,x_3,x_4)$ to $(x_1,x_2,x_3,-x_4)$. Thus the composition takes $(x_1,x_2,x_3,x_4)$ to $(x_2,x_1,x_3,x_4)$, which shows that we may assume $T_1\ge T_2$. It will be convenient for us to denote
\begin{align*}
\gamma(T_0,T_2,T_3,T_4)=\frac{T_0+\sqrt{T_0^2+16T_0T_2}+16\sqrt{T_3^2+T_4^2}}8.
\end{align*}
As we will see shortly, the maximal subgroups of $G$ containing $H$ are the subgroups $K_1= SU(2)\times S^1$ and $K_2= S^1\times SU(2)$ and a one-parameter family of three-dimensional subgroups $K^\theta$, $\theta\in[0,2\pi)$, isomorphic to $SU(2)$. Our main theorem leads to the following result. We distinguish the case where the supremum $\alpha_{G/H}$ is attained by one of the four-dimensional subgroups from the case where it is attained by a three-dimensional subgroup.

\begin{prop}\label{V2R4}
	Suppose that $G/H=V_2(\R^4)$ and $T_1\ge T_2$. Then we have: 
	\begin{itemize}
		\item[\emph{(a)}]$\alpha_{G/H}=\alpha_{\fk_2}$ and $\beta_{\fk_2}-\alpha_{\fk_2}>0$ if and only if
		$$
		\gamma(T_0,T_2,T_3,T_4)\le T_1<T_2+\frac{T_0+\sqrt{T_0^2+16T_0T_2}}8.
		$$
		\item[\emph{(b)}] 
		$\alpha_{G/H}=\alpha_{\fk^\theta}$ and $\beta_{\fk^\theta}-\alpha_{\fk^\theta}>0$ for some $\theta\in[0,2\pi)$ if and only if 
		$$
		\frac{T_0}2-T_2+4\sqrt{T_3^2+T_4^2}<T_1\le\gamma(T_0,T_2,T_3,T_4).
		$$
	\end{itemize}
	In both cases, $S_{|\M_T}$ attains its global maximum.
\end{prop}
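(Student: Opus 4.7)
The plan is to enumerate all intermediate subalgebras $\fh\subsetneq\fk\subsetneq\fg$, compute $\alpha_\fk$ and $\beta_\fk$ in each case, extract the two thresholds via a common rationalization, and then invoke the main theorem. A short bracket calculation with $\fh=\R(\bi,\bi)$ shows the full list of intermediate subalgebras is: the 2-dimensional abelian $\fk_0=\fh\oplus\fm_0$; the circle of 3-dimensional copies of $\fsu(2)$ given by $\fk^\theta=\{(X,\phi_\theta X):X\in\fsu(2)\}$, where $\phi_\theta$ is conjugation by $e^{\theta\bi/2}$; and the two maximal 4-dimensional subalgebras $\fk_1$ and $\fk_2$ (no 5-dimensional proper subalgebras exist). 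Since $\dim(K_0/H)=1$, we have $\alpha_{\fk_0}=0$, so $\fk_0$ can be ignored in the comparison.

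Next I would compute $\alpha_\fk$ and $\beta_\fk$ in the remaining cases. For $K^\theta/H\cong S^2$, the lone isotropy module $\fk^\theta\cap\fm$ spanned by $(\bj,\phi_\theta\bj)/\sqrt{2},(\bk,\phi_\theta\bk)/\sqrt{2}$ has vanishing structure constants, the Killing form of $\fk^\theta\cong\fsu(2)$ contributes $b=4$, and the $T$-eigenvalue on this module equals $T_1^\theta=\tfrac{1}{2}(T_1+T_2)+\cos\theta\,T_3+\sin\theta\,T_4$, yielding $\alpha_{\fk^\theta}=4/(T_1+T_2+2(\cos\theta T_3+\sin\theta T_4))$, maximized over $\theta$ at $\cos\theta T_3+\sin\theta T_4=-\sqrt{T_3^2+T_4^2}$. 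For $K_i/H$ ($i=1,2$, 3-dimensional with isotropy summands $\fm_0,\fm_i$), the scalar curvature in inverse variables reduces to $S=8y_i-y_i^2/y_0$, and a single Lagrange multiplier calculation subject to $T_0y_0+2T_iy_i=1$ produces a unique admissible critical point with closed form $\alpha_{\fk_i}=(8T_i+T_0-\sqrt{T_0^2+16T_0T_i})/(2T_i^2)=64/(\sqrt{T_0}+\sqrt{T_0+16T_i})^2$; the second form makes $\alpha_{\fk_i}$ manifestly decreasing in $T_i$, so $T_1\ge T_2$ forces $\alpha_{\fk_1}\le\alpha_{\fk_2}$. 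For the bases, $G/K_2\cong S^2$ has single irreducible module $\fm_1$ with vanishing brackets, giving $\beta_{\fk_2}=4/T_1$; and $(\fk^\theta)^\perp=\{(X,-\phi_\theta X):X\in\fsu(2)\}$ lies in $\fm$, is $\Ad_{K^\theta}$-irreducible, and satisfies $[(\fk^\theta)^\perp,(\fk^\theta)^\perp]\subset\fk^\theta$, giving $\beta_{\fk^\theta}=12/(T_0+T_1+T_2-2(\cos\theta T_3+\sin\theta T_4))$.

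The final step is algebraic. Multiplying $8T_2+T_0-\sqrt{T_0^2+16T_0T_2}$ by its conjugate produces the clean factor $64T_2^2$, and this single manipulation converts $\beta_{\fk_2}>\alpha_{\fk_2}$ into $T_1<T_2+(T_0+\sqrt{T_0^2+16T_0T_2})/8$ and $\alpha_{\fk_2}\ge\max_\theta\alpha_{\fk^\theta}$ into $T_1\ge\gamma(T_0,T_2,T_3,T_4)$. Likewise, at the optimal $\theta$ the inequality $\beta_{\fk^\theta}>\alpha_{\fk^\theta}$ clears to $T_1>T_0/2-T_2+4\sqrt{T_3^2+T_4^2}$. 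This establishes the equivalences in (a) and (b). For the global maximum conclusion, I invoke the main theorem: in case (a) with $T_1>\gamma$ strict, $\fk_2$ is the unique subgroup of lowest dimension with $\alpha_\fk=\alpha_{G/H}$, and the hypothesis $\beta_{\fk_2}-\alpha_{\fk_2}>0$ delivers the global maximum; in case (b) some $\fk^\theta$ of dimension $3<4=\dim\fk_2$ attains $\alpha_{G/H}$, and $\beta_{\fk^\theta}-\alpha_{\fk^\theta}>0$ at that $\theta$ again delivers the global maximum. I expect the main obstacle to be the Lagrange computation for $\alpha_{\fk_i}$ on $K_i/H$ together with the bookkeeping needed to see that each threshold assembles itself into the closed form $\gamma$.
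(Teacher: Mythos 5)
Your proof is correct and follows essentially the same route as the paper: enumerate the intermediate subgroups $K_0$, $K_1$, $K_2$, $K^\theta$, compute $\alpha_\fk$ and $\beta_\fk$ for each, and rationalize the inequalities $\alpha_{\fk_2}\ge\max_\theta\alpha_{\fk^\theta}$, $\beta_{\fk_2}>\alpha_{\fk_2}$, $\beta_{\fk^{\theta_0}}>\alpha_{\fk^{\theta_0}}$ to extract the thresholds. The rewriting $\alpha_{\fk_i}=64/\bigl(\sqrt{T_0}+\sqrt{T_0+16T_i}\,\bigr)^2$ is a nice touch that the paper does not use and makes both the monotonicity in $T_i$ (hence $\alpha_{\fk_1}\le\alpha_{\fk_2}$) and the conversion to $\gamma$ immediate; your parametrization of the $K^\theta$ family via conjugation by $e^{\theta\bi/2}$ differs in sign conventions from the paper's $D_{s,\theta}$ but is equivalent after maximizing over the circle.
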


\begin{proof}
	We start by describing the space  of all decompositions. Since the representations of $H$ on $\fm_1$ and $\fm_2$ are complex equivalent representations, $\DD$ is the two-parameter family of decompositions $D_{s,\theta}$ with modules
	\begin{align}\label{decomp_last}
		\fm_0=\spam\{(\,\mathbf{i},-\,\mathbf{i})\}, \quad
		\fm_1^{s,\theta}=\{(se^{\theta\bi}z\bj,-tz\,\bj)\mid z\in\C \}\quad\mbox{and}\quad \fm_2^{s,\theta}=\{(te^{\theta\bi}z\,\bj,sz\,\bj)\mid z\in\C \},
	\end{align}
	where $s^2+t^2=1$.
	Let us choose a $Q$-orthonormal basis in $\fm_1^{s,\theta}$ consisting of the vectors
	\begin{align}\label{basis_m1}
		v_1=(s\cos\theta\, \bj+s\sin\theta\,\bk,-t\,\bj)\qquad\mbox{and}\qquad v_2=(-s\sin\theta\, \bj+s\cos\theta\,\bk,-t\,\bk)
	\end{align}
	and one in $\fm_2^{s,\theta}$ consisting of
	\begin{align}\label{basis_m2}
		w_1=(t\cos\theta\, \bj+t\sin\theta\,\bk,s\bj)\qquad\mbox{and}\qquad w_2=(-t\sin\theta\,\bj+t\cos\theta\,\bk,s\bk).
	\end{align}	
	Using these bases, for the decomposition~$D_{s,\theta}$, one easily computes
	\begin{equation}\label{str_cons_Stiefel}
		[011]=[022]=4(s^2-t^2)^2\qquad \mbox{and} \qquad [012]=8s^2t^2.
	\end{equation}
	The structure constants unrelated to these by permutation are~0.
	
	The decomposition corresponding to $s=1, t=0$ gives rise to the natural set of diagonal metrics $x_3=x_4=0$ with  intermediate subgroups
	$$
	K_0=T^2,\qquad K_1=SU(2)\times S^1_{\textrm{rt}}\qquad\mbox{and}\qquad K_2=S^1_{\textrm{lt}}\times SU(2)
	$$
	with Lie algebras
	\begin{align*}
		\fk_0 
		=\fh\oplus\fm_0, \qquad
		\fk_1=
		\fh\oplus\fm_0\oplus\fm_1 \qquad \mbox{and} \qquad 
		\fk_2
		=\fh\oplus\fm_0\oplus\fm_2.
	\end{align*}
	If $s=t=\frac{\sqrt2}2$, we have, for each~$\theta$, the intermediate subgroup 
	$$
	K^\theta\simeq SU(2) \qquad \text{with Lie algebra}\qquad  \fk^\theta	=\fh\oplus\fm_1^{s,\theta}.
	$$
	The remaining decompositions do not produce any subgroups. Thus, there are three isolated intermediate subgroups, $K_0$, $K_1$ and $K_2$, as well as a one-parameter family of subgroups~$K^\theta$.
	
	The identity component of the normalizer of $H$ is given by  $N_0(H)=T^2\subset SU(2)\times SU(2),$ and hence $N_0(H)/H\simeq S^1$, represented by elements of the form $ (e^{\eta\bf i},1)\in N_0(H)$. These elements act via right translation on $G/H$ and via conjugation on~$\fm$. Thus they also act on $\DD$ and, via pullback, on~$\M$. It is easy to see that $(e^{\eta\bf i},1)$ takes $D_{s,\theta}$ to $D_{s,\theta+2\eta}$.  This implies, in particular, that the subalgebras $\fk^\theta$ are all conjugate to each other by elements of~$N_0(H)$. Since $N_0(H)$ acts on $G/H$ by isometries in~$Q$, it follows that $(G/K^\theta,Q)$, as well as $(K^\theta/H,Q)$, are all isometric to each other.
	
	We now compute the constants $\alpha_\fk$ and $\beta_\fk$ for the intermediate subalgebras. For the maximal subgroups $K_1$ and $K_2$ we have
	\begin{align*}
		\alpha_{\fk_i}=\frac{8T_i+ T_0-\sqrt{ T_0^2+16  T_0 T_i} }{2T_i^2}\qquad\mbox{and}\qquad \beta_{\fk_i}=\frac{4}{T_j},
	\end{align*}
	where $(i,j)$ is a permutation of $\{1,2\}$. Indeed, $G/K_i$ are isotropy irreducible, which easily determines~$\beta_{\fk_i}$. The space $K_i/H$ has two irreducible summands, and the scalar curvature, under the trace constraint, is $$S=8y_1-\frac{y_1^2}{y_0}.$$
The above value $\alpha_{\fk_i}$ is its maximum. The assumption $T_1\ge T_2$ implies that $\alpha_{\fk_2}\ge\alpha_{\fk_1}$. Furthermore, $\beta_{\fk_2}-\alpha_{\fk_2}>0$ if and only if
	\begin{eqnarray}\label{Stiefel_max1}
		T_1<T_2+\frac{T_0+\sqrt{T_0^2+16T_0T_2}}8.
	\end{eqnarray}
	
	For the three-dimensional subalgebras $\fk^\theta$, we use the bases~\eqref{basis_m1} and~\eqref{basis_m2}. Now, the parameters $s$ and $t$ both equal $\frac1{\sqrt2}$. The tensor field $T$ satisfies
	\begin{align}\label{T_transform}
		2T(v_1,v_1)&=2T(v_2,v_2)=T_1+T_2-2(\cos\theta\, T_3-\sin\theta\, T_4), \notag \\
		2T(w_1,w_1)&=2T(w_2,w_2)=T_1+T_2+2(\cos\theta\, T_3-\sin\theta\, T_4), \notag  \\
		2T( v_1,w_1)&=2T( v_2,w_2)=T_1-T_2, \notag  \\
		T( v_1,w_2)&=-T( v_2,w_1)=\cos\theta\, T_4+\sin\theta\, T_3,
	\end{align}
	and $T( v_1,v_2)=T( w_1,w_2)=0$. The action of the quotient $N_0(H)/H$ establishes an isometry between $(K^\theta/H,Q)$ and the space $(\diag(SU(2))/\diag(S^1),Q)$, which one easily sees has scalar curvature~$4$. The trace constraint means that $g(v_1,v_1)=2T(v_1,v_1)$, implying
	\begin{align*}
		\alpha_{\fk^\theta}=\frac{4}{T_1+T_2-2(\cos\theta\, T_3-\sin\theta\, T_4)}.
	\end{align*}
	
	The action of the normalizer shows that $(G/K^\theta,Q)$ are all isometric to the symmetric space $((SU(2)\times SU(2))/\diag(SU(2)),Q)$. Thus, they have scalar curvature~12. The trace constraint for $K^\theta$-invariant metrics on $G/K^\theta$ takes the form
	\begin{align*}
		g\bigg(\frac{({\bf i},-{\bf i})}{\sqrt{2}},\frac{(\bf i,-\bf i)}{\sqrt{2}}\bigg)=g(w_1,w_1)&=g(w_2,w_2)
		\\
		&=T\bigg(\frac{({\bf i},-{\bf i})}{\sqrt{2}},\frac{(\bf i,-\bf i)}{\sqrt{2}}\bigg)+T(w_1,w_1)+T(w_2,w_2)
		\\
		&=T_0+T_1+T_2+2(\cos\theta\, T_3-\sin\theta\, T_4).
	\end{align*}
	Thus
	\begin{equation*}
		\beta_{\fk^\theta}=\frac{12}{T_0+T_1+T_2+2(\cos\theta\, T_3-\sin\theta\, T_4)}.
	\end{equation*}
	
	We now choose an angle $\theta_0$ such that $\alpha_{\fk^{\theta_0}}$ is maximal, i.e., $\alpha_{\fk^{\theta_0}}=\sup_{\theta}\alpha_{\fk^\theta}$. Observe that
	$$\cos\theta\, T_3-\sin\theta\, T_4=\sqrt{T_3^2+T_4^2}\,\sin(\theta-\eta)$$
	for some phase shift $\eta$. The largest possible value of this quantity is $\sqrt{T_3^2+T_4^2}$, which means
	\begin{equation*}
		\alpha_{\fk^{\theta_0}}=\frac{4}{T_1+T_2-2\sqrt{T_3^2+T_4^2}}\qquad \mbox{and} \qquad \beta_{\fk^{\theta_0}}=\frac{12}{T_0+T_1+T_2+2\sqrt{T_3^2+T_4^2}}.
	\end{equation*}
	Clearly, $\beta_{\fk^{\theta_0}}-\alpha_{\fk^{\theta_0}}>0$ if and only if 
	\begin{equation}\label{Stiefel_max_2}
		T_0<2T_1+2T_2-8\sqrt{T_3^2+T_4^2}.
	\end{equation}
	
	Let us determine $\alpha_{G/H}$. As noted above, the assumption $T_1\ge T_2$ implies that $\alpha_{\fk_2}\ge\alpha_{\fk_1}$. Elementary analysis shows that $\alpha_{\fk_2}\ge\alpha_{\fk^{\theta_0}}$ if and only if $T_1\ge\gamma(T_0,T_2,T_3,T_4)$. Combining this condition with~\eqref{Stiefel_max1}, we arrive at statement~(a) of the proposition. Reversing the inequality and taking note of~\eqref{Stiefel_max_2}, we obtain statement~(b).
\end{proof}

\begin{rem}\label{rem_diag_univ}
	For the set of diagonal metrics $x_3=x_4=0$ one easily solves the equation $\Ric(g)=cT$ directly.  This equation reduces to the system
	\begin{align}\label{sys_direct}
		x_0=(4-cT_1)x_1,
		\qquad
		x_0=(4-cT_2)x_2,
		\qquad 
		cT_0&=(4-cT_1)^2+(4-cT_2)^2.
	\end{align}
One easily shows that the solution is unique.	In fact, our main theorem, applied to such diagonal metrics, shows that the critical point  is always a global maximum. The action of $N_0(H)/H$ fixes the diagonal metrics and acts by isometries. Thus the Ricci curvature of a diagonal metric must be diagonal as well. In particular, if $T_3=T_4=0$, then every solution to the system~\eqref{sys_direct}  is a critical point of $S_{|\M_T}$ on all of $\M_T$. In Figure~\ref{Stiefel_regions} these metrics lie in the union of the 3 grey regions. On the other hand, given a critical point of $S_{|\M_T}$ with $T_3=T_4=0$ and $x_3\ne0$, one obtains a circle of further critical points by applying the normalizer.
\end{rem}

For a metric $g$ as in~\eqref{Stiefel_metric}, the constraint $\tr_gT=1$ becomes
\begin{equation*}
	\frac{T_0}{x_0}+\frac{ 2x_1T_2 +2x_2T_1-4x_3T_3-4x_4T_4}{\Lambda}=1,
\end{equation*}
and the scalar curvature satisfies
\begin{equation*}
	S(g)=\frac{8}{x_0}+\frac{8(x_1+x_2)}{\Lambda}-\frac{8x_1x_2}{x_0\Lambda}
	-\frac{x_0(x_1-x_2)^2}{\Lambda^2} - \frac{2x_0}{\Lambda},
\end{equation*}
where $\Lambda=x_1x_2-x_3^2-x_4^2$; see, e.g.,~\cite{BWZ04}.  We may assume that $T_0=1$. If $T_3^2+T_4^2=0$, we show in Figure~\ref{Stiefel_regions} points in the $(T_1,T_2)$-plane that correspond to various behaviors of~$S_{|\M_T}$. In the union of the three grey regions the solutions to \eqref{sys_direct} guarantee the existence of a (diagonal) critical point. In the dark-grey region $\fk_2$ yields a global maximum, and in the middle-grey region ${\fk_\theta}$ . A computer-assisted experiment shows that this global maximum is always a diagonal metric; see Example~\ref{exa_max+saddle}. However, in the light grey region the diagonal critical point might not be a global maximum, as we demonstrate below.

If $T_3^2+T_4^2\ne0$, the picture is the same but with the line $2T_1=1-2T_2$ and the curve $8T_1=1+\sqrt{1+16T_2}$ shifted to the right and the reflection of $8T_1=1+\sqrt{1+16T_2}$ shifted up. 

\begin{figure}
	\centering
	\includegraphics[width=80mm]{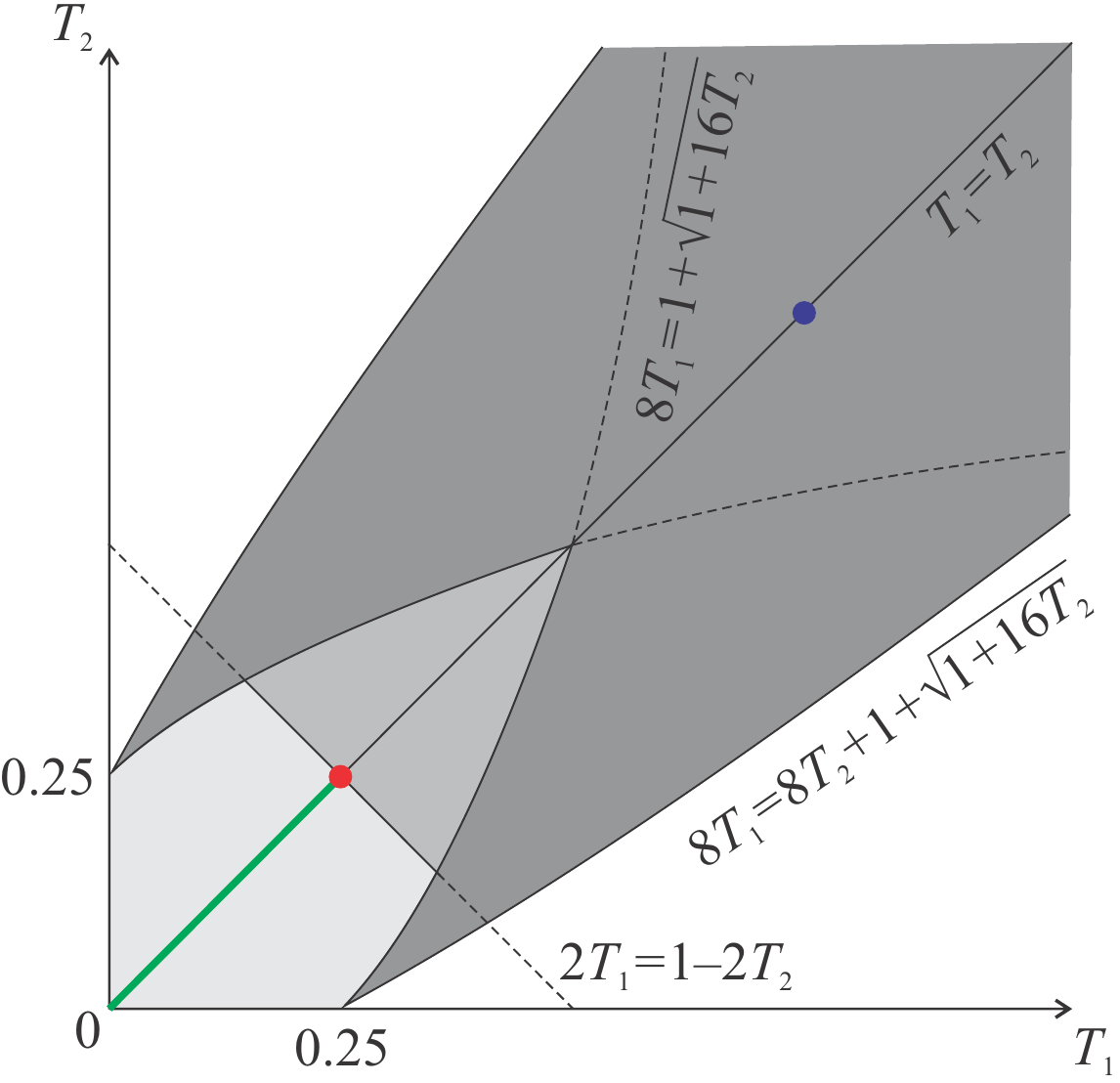}
	\caption{Prescribed Ricci curvature on the Stiefel manifold $V_2(\R^4)$}
	\label{Stiefel_regions}
\end{figure}

\bigskip

Next, we give several examples that illustrate various behaviors.

\begin{example}\label{saddle_stiefel}
	Suppose that $T_0=1$, $T_1=T_2=t>0$ and $T_3^2+T_4^2=0$. A straightforward computation shows that the diagonal metrics with
	\begin{align}\label{Stiefel_diag_crit}
		x_0=\sqrt{32t + 1},\qquad x_1=x_2=\frac{32t+1+\sqrt{32t+1}}{8} \qquad\mbox{and}\qquad  x_3=x_4=0
	\end{align}
	solve the system \eqref{sys_direct} and hence are critical points of the functional $S_{|\M_T}$ when restricted to the set of diagonal metrics.
	By Remark~\ref{rem_diag_univ}, they are also critical points of the scalar curvature functional on all of $\M_T$.
	Maple shows that all four eigenvalues of the Hessian are negative for $t>\frac14$, i.e.,~\eqref{Stiefel_diag_crit} defines a local maximum. By the Maple experiment in Example~\ref{exa_max+saddle}, it is a global maximum. On the other hand, if $t<\frac14$, we have two negative and two positive eigenvalues, which means~\eqref{Stiefel_diag_crit} is a saddle. Thus global maxima among the set of diagonal metrics in $\M_T$ can, in fact, be saddles on all of~$\M_T$. 
	
	We point out that~\eqref{Stiefel_diag_crit} is the Jensen Einstein metric when $t=\frac34$, and a global maximum. It is marked with the blue dot in Figure~\ref{Stiefel_regions}. 
\end{example}

\begin{example}\label{surface_critical}
	From the previous example we see that the metric with $t=\frac14$ must be special. Thus we choose $T_0=1$, $T_1=T_2=\frac14$ and $T_3^2+T_4^2=0$. This choice of $T$ is marked by the red dot in Figure~\ref{Stiefel_regions}. Direct verification shows that the metrics $g\in\M_T$  with
	\begin{equation}\label{disc_crit_pts}
		x_0=2t,\qquad	x_1=x_2=t,\qquad x_3 = t\sqrt{\frac{2t - 3}{2t - 1}}\cos\psi\qquad\mbox{and}\qquad x_4 = t\sqrt{\frac{2t - 3}{2t - 1}}\sin\psi
	\end{equation}
	are critical points of $S_{|\M_T}$ for $t\in\big[\frac32,\infty\big)$ and $\psi\in [0,2\pi)$ with scalar curvature $8$.  They form a surface diffeomorphic to $\R^2$, described in the coordinates $(t,\psi)$. The normalizer $N_0(H)/H$ acts on this surface via $(t,\psi)\to (t,\psi+2\eta)$. Thus metrics with the same value of $\psi$ are isometric. On the other hand, the squared volume of the metric~\eqref{disc_crit_pts} equals $\frac{8t^5}{(2t-1)^2}$, and hence metrics with different values of $t$ are not isometric. To determine the critical point type of~\eqref{disc_crit_pts}, we compute the eigenvalues of the Hessian of~$S_{|\M_T}$. Two of them are always negative, and the other two vanish. The 0-eigenspace is tangent to the surface of critical points. Consequently, this surface is a non-degenerate critical submanifold with index~2. Using the Morse--Bott lemma, we conclude that it is isolated and a local maximum. Using the numerical discussion in Example 3,  we conclude that it must in fact be a  global maximum since otherwise the metrics in Example~\ref{saddle_stiefel} with $t$ near $1/4$ would have to have scalar curvature larger than~$8$.
\end{example}

\begin{example}\label{exa_max+saddle}
Assume that $T_3^2+T_4^2=0$. As explained below,  $S_{|\M_T}$ has a non-diagonal critical point if and only if $T$ lies in the pink region in Figure~\ref{Stiefel_numeric}. Since the action of $N_0(H)/H$ leaves $T$ unchanged, we obtain a circle of non-diagonal critical points for each such~$T$. It is always a non-degenerate critical submanifold of index~2 and co-index~1. In addition, we have a diagonal critical point for $T$ between the thick curves. By Proposition~\ref{V2R4}, this diagonal critical point must be a global maximum when $T$ lies in the dark-grey or the middle-grey region in Figure~\ref{Stiefel_regions}. Computation of the eigenvalues of the Hessian indicates that it is a local maximum for $T$ in the rest of the dotted region and a saddle with index and co-index~2 for $T$ in the yellow region. The transition from the pink to the yellow region is achieved across the curve
	\begin{equation}\label{transition}
		T_1(t)=\frac{4t^2(1-t)}{16t^4+1},
		\qquad
		T_2(t)=\frac{t(4t-1)}{16t^4+1},\qquad t\in\big(\tfrac14,1\big).
	\end{equation}
The Hessian at the critical points corresponding to this curve has two zero eigenvalues. However, we do not know whether these critical points lie on critical submanifolds as in Example~\ref{surface_critical}.

To verify the above claim about $S_{|\M_T}$ having a non-diagonal critical point for $T$ in the pink region, we first observe that $\Ric(g)$ is diagonal if and only if $x_3=x_4=0$ or $x_0^2=4x_1x_2$. This is because the off-diagonal entries in the Ricci curvature are 
$$
\frac{x_i(4x_1x_2-x_0^2)}{x_0(x_1x_2 - x_3^2 - x_4^2)}, \qquad i=1,2.
$$
Since the action of $N_0(H)/H$ leaves diagonal Ricci curvature unchanged, it suffices to consider only metrics $g$ with~$x_4=0$. We take two million non-diagonal~$g$ satisfying $x_0^2=4x_1x_2$ and $x_4=0$ and calculate $\Ric(g)$. After checking that $\Ric(g)$ is positive-definite and normalizing so that its $\fm_0$ component becomes~1, we mark the $\fm_1$ and $\fm_2$ components in the $(T_1,T_2)$-plane as in Figure~\ref{Stiefel_numeric}. The obtained values fill up the pink region. Note that the critical circles, which are obtained by applying the normalizer,  have radius $x_3$ with $0<x_3<\frac12$. As this radius becomes zero, we obtain the curve in~\eqref{transition}. Hence here the critical circles merge with the diagonal critical point. In the boundary on the right-hand side of the pink region, we necessarily have critical circles with at least two zero eigenvalues in the Hessian since, as one moves to the right, they must disappear.

	For a specific example, choose $T_0=1$, $T_1=\frac{135}{472}$ and $T_2=\frac{15}{118}$, the green dot in Figure~\ref{Stiefel_numeric}. Direct verification shows that the gradient of $S_{|\M_T}$ vanishes at the circle of metrics with 
	$x_0=\frac{181}{59}$, $x_1=\frac{905}{472}$, $x_2=\frac{362}{295}$ and $x_3^2+x_4^2=\frac{32761}{55696}$. These metrics are all saddles with index~2 and co-index~1. At the same time, the diagonal metric with
	$$
	x_0=\frac{\sqrt{20866}}{59},\qquad x_1=\frac{62598+277\sqrt{20866}}{54044}\qquad\mbox{and}\qquad x_2=\frac{83464-439\sqrt{20866}}{22892}
	$$
	is a strict local maximum. It cannot be a global maximum because its scalar curvature is less than~$\alpha_{\fk^0}$ and there are no further critical points.
	
	It follows that, outside the pink region, the functional has only a diagonal critical point. This must be the unique critical point. Thus in the dark-grey or middle-grey region in  Figure~\ref{Stiefel_regions} the global maximum must be a diagonal metric.
	\begin{figure}
		\centering
		\includegraphics[width=102mm]{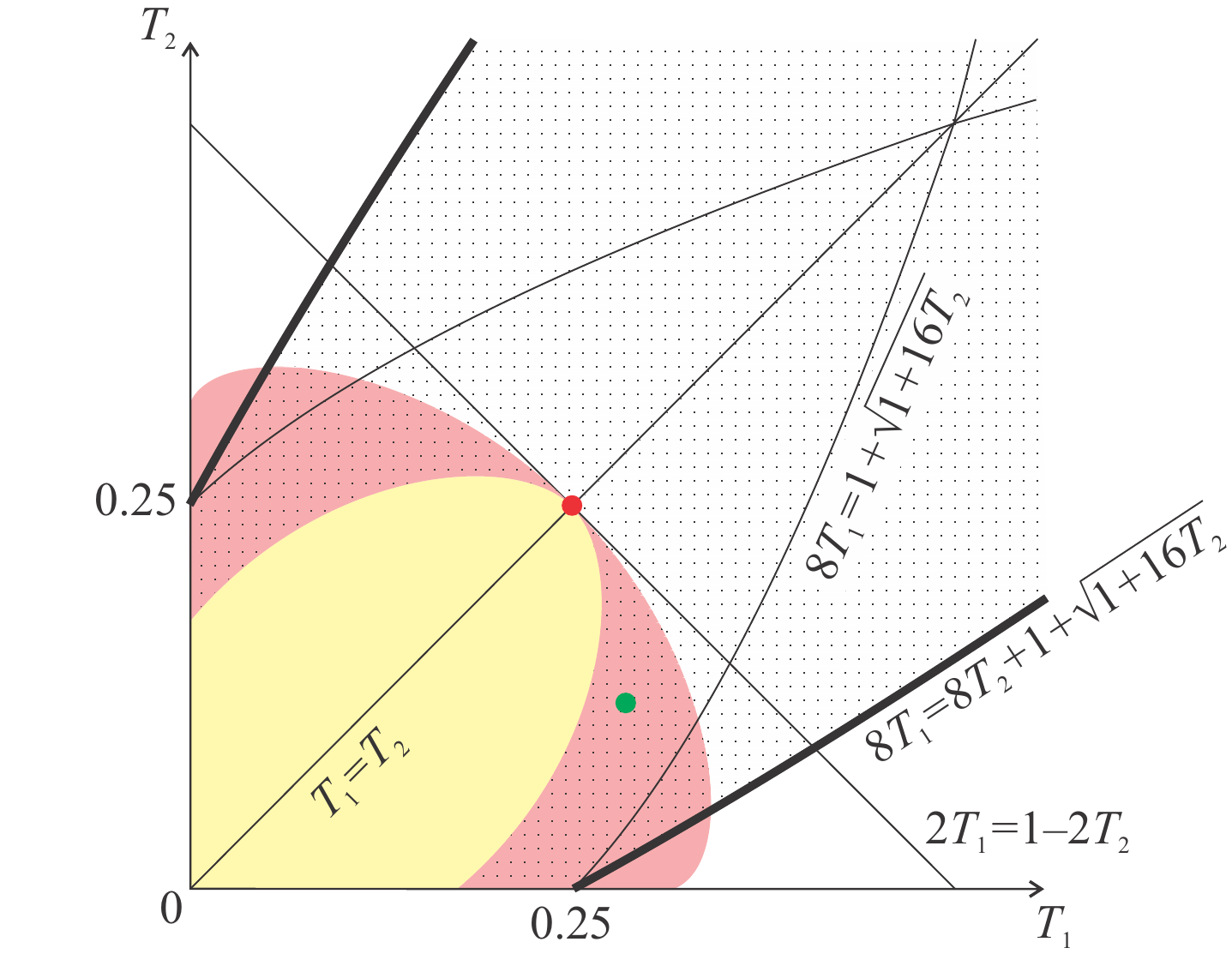}
		\caption{Ricci curvature of non-diagonal metrics on the Stiefel manifold $V_2(\R^4)$}
		\label{Stiefel_numeric}
	\end{figure}
\end{example}
	 As far as non-diagonal Ricci candidates $T$ are concerned, we make the following remarks. 
	Recall that $V_2(\R^4)$ supports two circles of (non-diagonal) Einstein metrics isometric to the canonical product Einstein metric on $V_2(\mathbb R^4)\simeq\Sph^3\times\Sph^2$; see~\cite{BWZ04}. Scaling each factor does not change the Ricci curvature and hence yields an arc of critical points. These arcs turn out to be non-degenerate critical submanifolds with index~3 and local maxima. Applying the normalizer, we obtain a circle of candidates $T$, each one of which admits  two arcs of critical points.
	
The Stiefel manifold is also a generalized Wallach space under the decomposition $D_{s,\theta}$ with $s=\frac1{\sqrt{2}}$ and any $\theta$. These spaces we will study in detail in~\cite{PZ23} in order to produce saddle critical points. Using the methods of~\cite{PZ23}, one shows that $S_{|\M_T}$ admits a critical point of co-index 0 or 1 if $$T_1=T_2<\frac14-\frac{2|T_3^2-T_4^2|}{\sqrt{T_3^2+T_4^2}}.$$
When $T_3=T_4=0$, this should be interpreted as $T_1=T_2<\frac14$, corresponding to the saddles discussed above in Example~\ref{saddle_stiefel}.

Finally, we remark that one can determine the metrics that are degenerate critical points. In \cite[Proposition~2.30]{PZ23} we show that $g$ is a non-degenerate critical point of $S_{|\M_T}$ if and only if $\rank d\Ric_g=\dim\M-1$. Assuming $x_0=1$ and $x_4=0$, a Maple computation shows that this only fails for three families of metrics. The first one satisfies $4x_1x_2=1$, and these are the metrics with diagonal Ricci tensor in Example~\ref{exa_max+saddle}. The second one is given by $x_3=\frac12$ with $x_1\ne x_2$, and for these the Ricci tensor is not diagonal. The third family consists of the metrics for which
$$
(16x_1^2x_3^2 + 4x_1^2 - 8x_3^2)x_2^2 + (-16x_1x_3^4 + 8x_1x_3^2 - x_1)x_2 + x_3^2 - 8x_1^2x_3^2 + 4x_3^4=0.
$$

\subsection{The Ledger--Obata space $\mathbf{\emph{H}^3/\text{diag}(\!\emph{H})}$ and $\mathbf{\emph{Spin}(8)/\emph{G}_2}$}

Our second example is motivated by the following observations. Let the homogeneous space $G/H$ be such that $\fm=\fm_1\oplus\fm_2$ with both summands irreducible. When $\fm_1$ and $\fm_2$ are inequivalent, one can solve the equation $\Ric(g)=cT$ directly; see~\cite{AP16}. There are only two cases where $\fm_1$ and $\fm_2$ are equivalent representations. If $G$ is simple, then $G/H$ must equal $Spin(8)/G_2=\Sph^7\times\Sph^7$; see the classification in~\cite{WDMK08,CH12}. If it is not, one easily checks that the only possibility is that $G/H$ is the Ledger--Obata space~$H^3/\diag(H)$, where $H$ is simple and embedded diagonally. 

We discuss here the latter case, the former one being quite similar; see Remark~\ref{rem_S7xS7}. In what follows, $(e_i)$ is a basis of~$\fh$ orthonormal with respect to~$-B_H$, the negative of the Killing form of~$H$.

Let $G/H=H^3/\diag(H)$. There are exactly three intermediate subgroups, namely,
\begin{align*}
	K_1&=\{(a,b,b)\in G\mid a,b\in H\},
	\\
	K_2&=\{(a,b,a)\in G\mid a,b\in H\}
	\qquad\mbox{and}\qquad
	K_3=\{(a,a,b)\in G\mid a,b\in H\}.
\end{align*}
The outer automorphism $R:G\to G$ given by $R(a,b,c)=(c,a,b)$ interchanges these subgroups. Choose the bi-invariant metric $Q$ on $G$ to be $Q=-B_H-B_H-B_H$. Then $R$ is an isometry of~$(G,Q)$.

Fix an $\Ad_H$-invariant, $Q$-orthogonal decomposition of $\fm$ by setting
\begin{equation}\label{decomp}
	\fm_1=\{(-2X,X,X)\in\fg\mid X\in\fh\}\qquad\text{and}\qquad \fm_2=\{(0,X,-X)\in\fg\mid X\in\fh\}. 
\end{equation}
Then $\fk_1=\fh\oplus\fm_1$. Furthermore, $\Ad_H$ is an irreducible representation of real type on each $\fm_i$ since $H$ is simple. The collections $\Big(\frac1{\sqrt6}(-2e_j,e_j,e_j)\Big)$ and $\Big(\frac1{\sqrt2}(0,e_j,-e_j)\Big)$ constitute $Q$-orthonormal bases of $\fm_1$ and $\fm_2$. With respect to these bases, the metric and $T$ have the form
\begin{equation}\label{LObata_metric}
	\centering
	g=
	\left(\begin{matrix}
		x_1&x_3\\
		x_3&x_2
	\end{matrix}\right) \qquad\text{and}\qquad
	T=\left(\begin{matrix}
		T_1&T_3\\
		T_3&T_2
	\end{matrix}\right)
\end{equation}
with $\det g>0$ and $\det T>0$. (Each entry in these matrices represents a scalar $a\times a$ matrix, where $a=\dim H$.) Applying our main theorem, we obtain the following result.
\begin{prop}\label{prop_LObata}
	Suppose that $G/H=H^3/\diag(H)$ and $T_1T_2>T_3^2$. The functional $S_{|\M_T}$ attains its global maximum if
	\begin{align}\label{LObata_cond}
		\frac34T_2<T_1<\frac95T_2-\frac{14\sqrt3}5|T_3|.
	\end{align}
\end{prop}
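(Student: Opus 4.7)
The plan is to apply the main theorem: identify an intermediate subalgebra $\fk\in\{\fk_1,\fk_2,\fk_3\}$ realising $\alpha_{G/H}$, then verify $\beta_\fk-\alpha_\fk>0$ under the hypothesis~\eqref{LObata_cond}. All three intermediate subgroups have the same dimension $2\dim H$, so the ``lowest possible dimension'' requirement is automatic. I would exploit the cyclic outer automorphism $R\colon G\to G$, $R(a,b,c)=(c,a,b)$, which permutes the $K_i$ and is a $Q$-isometry. Each $K_i/H$ is naturally diffeomorphic to $H$ in such a way that its $K_i$-invariant metrics are scalar multiples of $-B_H$, and similarly for $G/K_i$; since $H$ is simple, both $\fk_i\cap\fm$ and $\fk_i^\perp$ are $\Ad_H$-irreducible of real type, so the constraints $\tr_hT_{|F}=1$ and $\tr_hT_{|B}=1$ determine the respective metrics uniquely. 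This reduces the computation of each $\alpha_{\fk_i}$ and $\beta_{\fk_i}$ to a single scalar-curvature evaluation.

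Next, I would compute the restrictions of $T$ to these irreducible submodules. A unit $Q$-norm vector in $\fk_2\cap\fm$ is $\tfrac{1}{\sqrt6}(X,-2X,X)$; decomposing it in the $\bar e_j,\tilde e_j$ basis of~\eqref{decomp} gives coefficients $-\tfrac12$ and $-\tfrac{\sqrt3}2$, so $T|_{\fk_2\cap\fm}=\tfrac14(T_1+3T_2+2\sqrt3\,T_3)Q$, with analogous expressions for $\fk_3\cap\fm$ and for the three complements $\fk_i^\perp$. Combining these with the facts that $(H,c(-B_H))$ has scalar curvature $\tfrac{\dim H}{4c}$ and that $[\fk_i^\perp,\fk_i^\perp]\subset\fh\oplus(\fk_i\cap\fm)$ (killing the $[ijk]$ correction in the $\beta_{\fk_i}$ computation), I expect to obtain $\alpha_{\fk_1}=\tfrac{3}{8T_1}$, $\alpha_{\fk_{2,3}}=\tfrac{3}{2(T_1+3T_2\pm2\sqrt3\,T_3)}$, $\beta_{\fk_1}=\tfrac{1}{2T_2}$, and $\beta_{\fk_{2,3}}=\tfrac{2}{3T_1+T_2\mp2\sqrt3\,T_3}$.

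Then the swap $(a,b,c)\mapsto(a,c,b)$ fixes $H$, interchanges $K_2\leftrightarrow K_3$, and sends $T_3\mapsto-T_3$, so I may assume $T_3\ge0$. Under this assumption $\alpha_{\fk_3}\ge\alpha_{\fk_2}$, and $\alpha_{G/H}=\max(\alpha_{\fk_1},\alpha_{\fk_3})$, with the larger of the two determined by the sign of $T_1-T_2+\tfrac{2T_3}{\sqrt3}$. A direct comparison shows $\beta_{\fk_1}>\alpha_{\fk_1}$ iff $T_1>\tfrac34T_2$ and $\beta_{\fk_3}>\alpha_{\fk_3}$ iff $T_1<\tfrac95T_2-\tfrac{14\sqrt3}{5}T_3$, so the hypothesis~\eqref{LObata_cond} guarantees $\beta_\fk-\alpha_\fk>0$ for whichever of $\fk_1,\fk_3$ realises $\alpha_{G/H}$; the main theorem then produces a global maximum of $S_{|\M_T}$.

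The main obstacle I expect is the bookkeeping in the second step: since $T_3$ is generally nonzero, the restriction of $T$ to each one-dimensional $\Ad_H$-isotypic submodule depends on all three of $T_1,T_2,T_3$, and the signs of the cross-coefficients in the change of basis must be tracked carefully to produce the correct $\pm$ patterns in the formulas. The remaining identifications, between $K_i/H$ and $G/K_i$ on the one hand and $H$ equipped with an appropriately scaled bi-invariant metric on the other, are routine but require consistent normalisation of $-B_H$ versus $Q$ on each submodule.
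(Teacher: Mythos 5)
Your proof is correct and follows essentially the same approach as the paper: compute $\alpha_{\fk_i}$ and $\beta_{\fk_i}$ from the normal-homogeneous scalar-curvature formula together with the order-three automorphism $R$, then feed the resulting inequalities into the main theorem, noting that all three intermediate subgroups have the same dimension. The only cosmetic differences are that you use the swap $(a,b,c)\mapsto(a,c,b)$ to reduce to $T_3\ge0$ whereas the paper carries $|T_3|$ through two cases, and that your sign of the $T_3$-term in $\alpha_{\fk_2},\beta_{\fk_2}$ (which comes from a direct change-of-basis rather than the paper's pullback matrix) is opposite to the paper's — but this only amounts to interchanging the labels of $K_2$ and $K_3$ and is immaterial since the final condition~\eqref{LObata_cond} depends only on $|T_3|$.
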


\begin{proof} We need to compute $\alpha_{\fk_i}$ and $\beta_{\fk_i}$. A well-known formula for the sectional curvature of a normal homogeneous metric (see~\cite[Proposition~7.87b]{AB87}) implies
	\begin{align*}
		S(Q_{|G/K_1})=\sum_{j,k}\sec(Q_{|G/K_1})\bigg(\frac{(0,e_j,-e_j)}{\sqrt2},\frac{(0,e_k,-e_k)}{\sqrt2}\bigg)=\frac12\sum_{j,k}|[e_j,e_k]|^2,
	\end{align*}
	where $|\cdot|$ is the norm corresponding to~$-B_H$. Similarly,
	\begin{align*}
		S(Q_{|K_1/H})=\sum_{j,k}\sec(Q_{|K_1/H})\bigg(\frac{(-2e_j,e_j,e_j)}{\sqrt6},\frac{(-2e_k,e_k,e_k)}{\sqrt6}\bigg)=\frac38\sum_{j,k}|[e_j,e_k]|^2.
	\end{align*}
	In addition, $(K_1/H,Q)$ is an isotropy irreducible symmetric space, and hence $[\fm_1,\fm_1]\subset\fh$. It follows from~\eqref{scalcurvx} that
	$$
	S(Q_{|G/K_1})=\frac a2 \qquad\text{and, by the above,}\qquad S(Q_{|K_1/H})=\frac{3a}8
	$$
	with $a=\dim H$. The trace constraints on $G/K_1$ and $K_1/H$ have the form $x_2=aT_2$ and $x_1=aT_1$, respectively. We conclude that
	\begin{equation*}
		\alpha_{\fk_1}=\frac{3}{8T_1}\qquad\text{and}\qquad\beta_{\fk_1}=\frac{1}{2T_2},
	\end{equation*}
	which means $\beta_{\fk_1}-\alpha_{\fk_1}>0$ if and only if $4T_1>3T_2$.
	
	As observed above, the automorphism $R$ takes $K_1$ to $K_2$. Since $R^3$ is the identity, the matrix of the pullback of $T$ by $R$ with respect to our fixed bases of~$\fm_1$ and $\fm_2$ is
	\begin{equation*}
		\left(\begin{matrix}
			-\frac12 &\frac{\sqrt{3}}{2}\\
			-\frac{\sqrt{3}}{2}&-\frac12
		\end{matrix}
		\right)
		\left(\begin{matrix}
			T_1&T_3\\
			T_3&T_2
		\end{matrix}
		\right)
		\left(\begin{matrix}
			-\frac12 &-\frac{\sqrt{3}}{2}\\
			\frac{\sqrt{3}}{2}&-\frac12
		\end{matrix}
		\right)
		=
		\left(\begin{matrix}
			\frac{T_1+3T_2-2\sqrt{3}T_3}{4} &\frac{\sqrt{3}(T_1-T_2)-2T_3}{4}\\
			\frac{\sqrt{3}(T_1-T_2)-2T_3}{4}&\frac{3T_1+T_2+2\sqrt3T_3}{4}
		\end{matrix}
		\right).
	\end{equation*}
	The maps from $(G/K_1,Q)$ to $(G/K_2,Q)$ and from $(K_1/H,Q)$ to $(K_2/H,Q)$ induced by $R$ are isometries. They preserve the scalar curvature, so
	\begin{equation*}
		\alpha_{\fk_2}=\frac{3}{2(T_1+3T_2-2\sqrt{3}T_3)}\qquad\text{and}\qquad
		\beta_{\fk_2}=\frac{2}{3T_1+T_2+2\sqrt{3}T_3}.
	\end{equation*}
	Consequently, $\beta_{\fk_2}-\alpha_{\fk_2}>0$ if and only if $5T_1<9T_2-14\sqrt{3}T_3$.
	
	To obtain the third subgroup, we need to apply $R$ to $K_2$, or $R^2$ to~$K_1$. The matrix of the pullback of $T$ by $R^2$ with respect to~\eqref{decomp} is
	\begin{equation*}
		\left(\begin{matrix}
			\frac{T_1+3T_2+2\sqrt{3}T_3}{4} &\frac{\sqrt{3}(T_2-T_1)-2T_3}{4}\\
			\frac{\sqrt{3}(T_2-T_1)-2T_3}{4}&\frac{3T_1+T_2-2\sqrt3T_3}{4}
		\end{matrix}\right).
	\end{equation*}
	This implies
	\begin{equation*}
		\alpha_{\fk_3}=\frac{3}{2(T_1+3T_2+2\sqrt{3}T_3)}\qquad\text{and}\qquad\beta_{\fk_3}=\frac{2}{3T_1+T_2-2\sqrt{3}T_3}.
	\end{equation*}
	Thus $\beta_{\fk_3}-\alpha_{\fk_3}>0$ if and only if $5T_1<9T_2+14\sqrt{3}T_3$.
	
	There are two scenarios for the condition of our main theorem to be satisfied. One is that $\alpha_{\fk_1}\ge\max\{\alpha_{\fk_2},\alpha_{\fk_3}\}$ (equivalently, $3T_1\le3T_2-2\sqrt{3}|T_3|$), in which case it suffices to demand that ${4T_1>3T_2}$. The other is that $\alpha_{\fk_1}\le\max\{\alpha_{\fk_2},\alpha_{\fk_3}\}$. Then we need $5T_1<9T_2-14\sqrt{3}|T_3|$. Combining these conditions, we arrive at~\eqref{LObata_cond}.
\end{proof}

For $g$ and $T$ of the form~\eqref{LObata_metric}, the constraint $\tr_gT=1$ is
\begin{equation*}
	\frac{a(x_1T_2+x_2T_1-2x_3T_3)}{x_1x_2-x_3^2}=1,
\end{equation*}
and a computation shows that the scalar curvature is given by
\begin{equation}\label{LObata_scal_curv}
	S(g)=\frac{a(9x_1x_2^2+12x_1^2x_2-6x_1x_3^2-18x_2x_3^2-x_1^3)}{24(x_1x_2-x_3^2)^2}.
\end{equation}
Thus we have the same critical points, up to scaling, no matter what group $H$ we choose. The formulas
$$
x=\frac12(T_1-T_2)\qquad\mbox{and}\qquad y=T_3
$$
define coordinates in the space of Ricci candidates $T$ normalized so that $T_1+T_2=1$. Figure~\ref{LObata_T_sym} shows points in the $(x,y)$-plane that correspond to different behaviors of~$S_{|\M_T}$. The tensor $T$ is positive-definite if and only if $x^2+y^2<\frac14$. The order-three automorphism $R$ yields a natural symmetry on the space of Ricci candidates. Its action rotates the picture in Figure~\ref{LObata_T_sym} by~$\frac{2\pi}3$. The inside of the large triangle is the set of $(x,y)$ that satisfy the condition of Proposition~\ref{prop_LObata}. The equalities $\alpha_{\fk_1}=\alpha_{G/H}$, $\alpha_{\fk_2}=\alpha_{G/H}$ and $\alpha_{\fk_3}=\alpha_{G/H}$ hold in the dark-grey, the middle grey and the light-grey triangle, respectively.  A computer-assisted experiment with one million metrics shows that $S_{|\M_T}$ also has a critical point for $(x,y)$ in the pink regions. Indefinite tensors that are Ricci curvatures of metrics up to scaling fill up the blue regions. Thus the image of the Ricci map is the union of the grey, pink and blue areas. In particular, for any $T$ in the white region $S_{|\M_T}$ has no critical points.

Maple is able to solve the Euler--Lagrange equations for $S_{|\M_T}$ for any specific choice of~$T$. It suggests that the solution representing a metric, when it exists, is unique and is always a local maximum (global if $T$ lies inside the triangle).

\begin{figure}
	\centering
	\includegraphics[width=95mm]{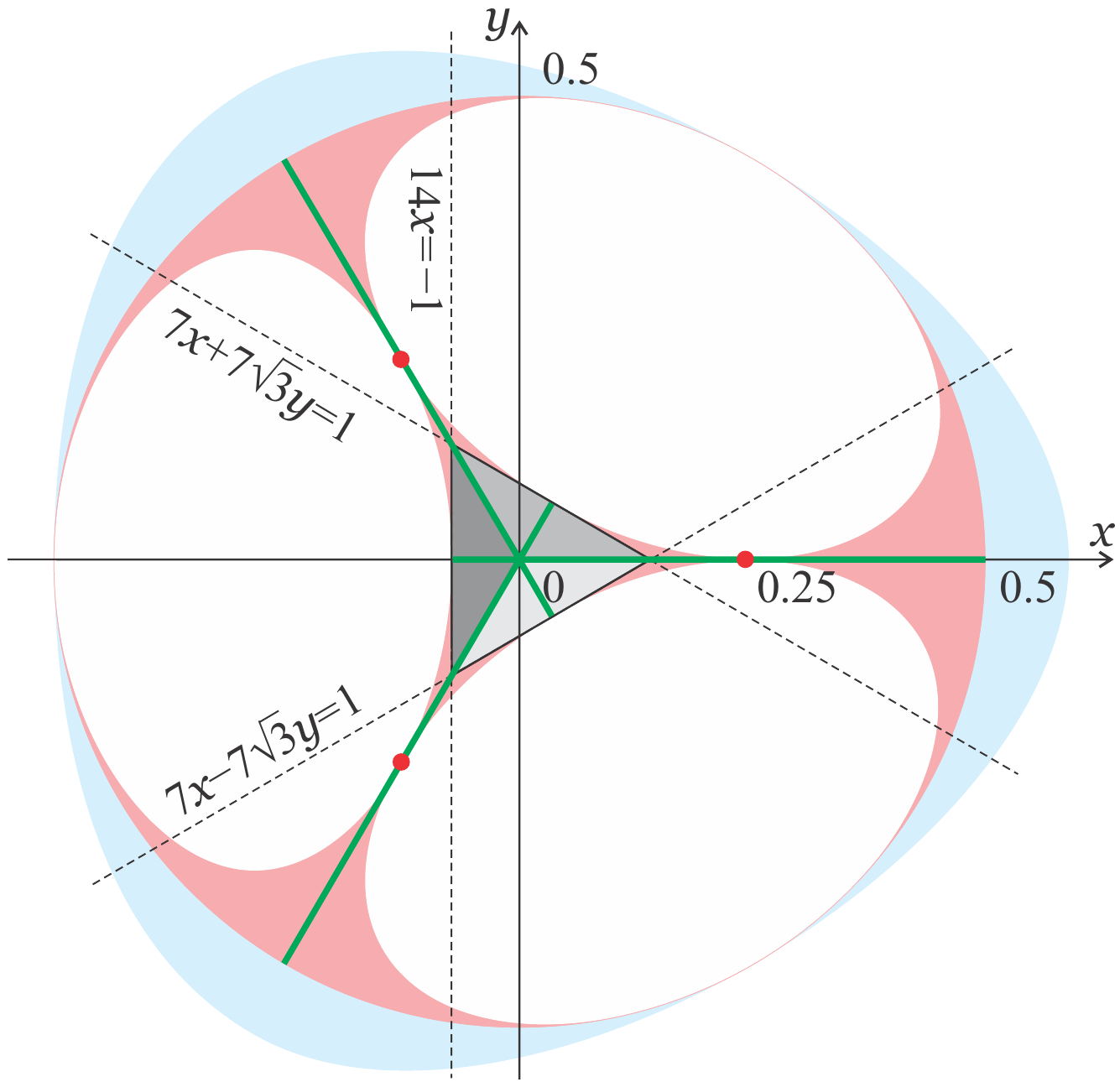}
	\caption{Prescribed Ricci curvature on the Ledger--Obata space $H^3/\diag(H)$}
	\label{LObata_T_sym}
\end{figure}

\begin{example}\label{Obata_diagonal}
	Assume that $T_1=t$, $T_2=1-t$ and $T_3=0$. One easily checks that the formulas
	\begin{align}\label{y=0}
		x_1= a\sqrt{-20t^2 + 30t - 9},\quad
		x_2=a\frac{-20t^2+ 30t - 9 + t\sqrt{-20t^2 + 30t - 9} }{3(7t - 3)}\quad\text{and}
		\qquad x_3=0
	\end{align}
	define a metric with positive Ricci curvature for $\frac37<t<1$, and this metric is a critical point of~$S_{|\M_T}$. Computing the Hessian, we conclude that it is a strict local maximum unless~$t=\frac34$. In Figure~\ref{LObata_T_sym} the Ricci candidates with $T_1=t\in\big(\frac37,1\big)$, $T_2=1-t$ and $T_3=0$ occupy the segment of the $x$-axis with $x\in\big(\!-\frac1{14}, \frac12\big)$. The orbit of this segment under $R$, indicated by the green lines, gives two further segments with the same behavior.

	The case of $t=\frac34$, respectively $x=\frac14$, is special. The orbit of the corresponding tensor $T$ under $R$ is depicted by the red dots in Figure~\ref{LObata_T_sym}. The three tensors in this orbit are, in fact, Einstein metrics. The $H^3$-equivariant diffeomorphisms from $H^3/\diag(H)$ to $H\times H$ given by 
	\begin{align*}
		(a,b,c)\diag(H)&\mapsto (ac^{-1},bc^{-1}),
		\\ (a,b,c)\diag(H)&\mapsto (ab^{-1},cb^{-1})\qquad\mbox{and}\qquad(a,b,c)\diag(H)\mapsto (ba^{-1},ca^{-1})
	\end{align*}
	are isometries between these Einstein metrics and the canonical Einstein product metric on~$H\times H$. Scaling the factors in the product metric, we obtain arcs of critical points.  As it turns out, these arcs are non-degenerate critical submanifolds with index~1 and co-index~$0$ and hence consist of local maxima. 
	
	The only other Einstein metric on $H^3/\diag(H)$ is the normal homogeneous metric induced by~$Q$; see~\cite{CNN17}. It corresponds to the origin in Figure~\ref{LObata_T_sym} and is a strict local maximum of the associated functional~$S_{|\M_T}$. 
	
	As in the case of the Stiefel manifold, one easily determines which critical points are degenerate. It turns out that this only happens for the ``red dot" Einstein metrics discussed above. Thus for the Ledger--Obata space all the functionals  $S_{|\M_T}$ are Morse or Morse--Bott functions.
\end{example}

\begin{rem}\label{rem_S7xS7}
The remaining homogeneous space with two equivalent summands is $Spin(8)/G_2$. It is, in fact, quite similar since the scalar curvature $S$ is, remarkably, given by the same formula as~\eqref{LObata_scal_curv} with $a=84$; see~\cite{Ke98}.   There are three intermediate subgroups isomorphic to $Spin(7)$, which are permuted by the triality automorphism of $Spin(8)$.  One  easily sees that the conditions for a global maximum on $Spin(8)/G_2$ are the same as on the Ledger--Obata space. The set of Einstein metrics consists of three metrics isometric to the canonical product Einstein metric on $\Sph^7\times\Sph^7$, and the normal homogeneous metric. 
\end{rem}

\bigskip

\providecommand{\bysame}{\leavevmode\hbox
	to3em{\hrulefill}\thinspace}

\end{document}